\titleformat{\section}[block]{\filcenter\normalfont\bfseries\large}{\thesection.}{.5em}{}\titlespacing*{\section}{0pt}{2\baselineskip}{1\baselineskip}
\titleformat{\subsection}[runin]{\normalfont\bfseries}{\thesubsection.}{.4em}{}[.]\titlespacing{\subsection}{0pt}{2ex plus .1ex minus .2ex}{.8em}
\titleformat{\subsubsection}[runin]{\normalfont\itshape}{\thesubsubsection.}{.3em}{}[.]\titlespacing{\subsubsection}{0pt}{1ex plus .1ex minus .2ex}{.5em}
\titleformat{\paragraph}[runin]{\normalfont\itshape}{\theparagraph.}{.3em}{}[.]\titlespacing{\paragraph}{0pt}{1ex plus .1ex minus .2ex}{.5em}
\definecolor{vdarkred}{rgb}{0.6,0,0.2}
\definecolor{vdarkblue}{rgb}{0,0.2,0.6}
\DeclareMathOperator{\real}{Re}
\newcommand{\ld}{\ldots}
\newcommand{\beg}{\begin}
\newcommand{\en}{\end}
\newcommand{\trm}{\textrm}
\newcommand{\bgt}{\begin{itemize}}
\newcommand{\ent}{\end{itemize}}
\newcommand{\op}{\operatorname}
\newcommand{\eqre}{\eqref}
\newcommand{\la}{\label}
\newcommand{\si}{\sigma}
 \newcommand{\bgn}{\begin{enumerate}}
\newcommand{\enn}{\end{enumerate}}
\newcommand{\Tr}{\operatorname{Tr}}
\newcommand{\E}{\mathbb{E}}
\newcommand{\R}{\mathbb{R}}
\newcommand{\C}{\mathbb{C}}
\newcommand{\pro}{probability }
\newcommand{\f}{\frac}
\newcommand{\ff}{\frac{1}}
\newcommand{\st}{such that }
\newcommand{\lam}{\lambda}
\newcommand{\ti}{\times}
\newcommand{\eps}{\varepsilon}
\newcommand{\bck}{\backslash}
\newcommand{\ovl}{\overline}
\newcommand{\bbm}{\begin{bmatrix}}
\newcommand{\ebm}{\end{bmatrix}}
\newcommand{\bes}{\begin{equation*}}
\newcommand{\ees}{\end{equation*}}
\newcommand{\be}{\begin{equation}}
\newcommand{\ee}{\end{equation}}
\newcommand{\bpm}{\begin{pmatrix}}
\newcommand{\epm}{\end{pmatrix}}
\newcommand{\cd}{\cdots}
\newcommand{\wt}{\widetilde}
\newcommand{\bpr}{\beg{proof}}
\newcommand{\epr}{\en{proof}}
\newcommand{\del}{\delta}
\newcommand{\ka}{\kappa}
\newcommand*{\deq}{\mathrel{\vcenter{\baselineskip0.65ex \lineskiplimit0pt \hbox{.}\hbox{.}}}=}
\newcommand{\PAR}[1]{{{\left(#1\right)}}} 
\newcommand{\AND}{{\quad \trm{ and } \quad }}
\newtheorem{Th}{Theorem}[section]
\newtheorem{assum}[Th]{Assumption}
\newtheorem{proposition}[Th]{Proposition} 
\newtheorem{lemma}[Th]{Lemma}
\newtheorem{cor}[Th]{Corollary}
\theoremstyle{definition}
\newtheorem{rmk}[Th]{Remark}
\newtheorem{Def}[Th]{Definition}
\definecolor{darkblue}{rgb}{0,0.3,0.9}
\definecolor{darkred}{rgb}{0.9,0,0.3}
\newcommand{\bs}[1]{\boldsymbol{\mathrm{#1}}} 
\newcommand{\bb}{\mathbb} 
\renewcommand{\cal}{\mathcal}
\newcommand{\ol}[1]{\overline{#1} \!\,} 
\newcommand{\me}{\mathrm{e}}
\newcommand{\col}{\mathrel{\vcenter{\baselineskip0.75ex \lineskiplimit0pt \hbox{.}\hbox{.}}}}
\renewcommand{\leq}{\leqslant}
\renewcommand{\geq}{\geqslant}
\renewcommand{\le}{\leqslant}
\renewcommand{\ge}{\geqslant}
\renewcommand{\epsilon}{\varepsilon}
\newcommand{\ind}[1]{\bs 1_{#1}}
\newcommand{\pb}[1]{\bigl({#1}\bigr)}
\newcommand{\pbb}[1]{\biggl({#1}\biggr)}
\newcommand{\qbb}[1]{\biggl[{#1}\biggr]}
\newcommand{\h}[1]{\{{#1}\}}
\newcommand{\hb}[1]{\bigl\{{#1}\bigr\}}
\newcommand{\hbb}[1]{\biggl\{{#1}\biggr\}}
\newcommand{\abs}[1]{\lvert #1 \rvert}
\newcommand{\absb}[1]{\bigl\lvert #1 \bigr\rvert}
\newcommand{\norm}[1]{\lVert #1 \rVert}
\newcommand{\normb}[1]{\bigl\lVert #1 \bigr\rVert}
\newcommand{\normB}[1]{\Bigl\lVert #1 \Bigr\rVert}
\DeclareMathOperator{\diag}{diag}
\DeclareMathOperator{\tr}{Tr}
\numberwithin{equation}{section}
\title{Spectral radii of sparse random matrices}
\author{Florent Benaych-Georges \and Charles Bordenave \and Antti Knowles} 
\date{September 19, 2019}
\begin{document}
\maketitle

\begin{abstract}
We establish bounds on the spectral radii for a large class of sparse random matrices, which includes the adjacency matrices of inhomogeneous Erd\H{o}s-R\'enyi graphs. Our error bounds are sharp for a large class of sparse random matrices. In particular, for the Erd\H{o}s-R\'enyi graph $G(n,d/n)$, our results imply that the smallest and second-largest eigenvalues of the adjacency matrix converge to the edges of the support of the asymptotic eigenvalue distribution provided that $d / \log n \to \infty$. Together with the companion paper \cite{bbkER}, where we analyse the extreme eigenvalues in the complementary regime $d / \log n \to 0$, this establishes a crossover in the behaviour of the extreme eigenvalues at $d \asymp \log n$. Our results also apply to non-Hermitian sparse random matrices, corresponding to adjacency matrices of directed graphs.
The proof combines (i) a new inequality between the spectral radius of a matrix and the spectral radius of its nonbacktracking version together with (ii) a new application of the method of moments for nonbacktracking matrices. 
\end{abstract}

\section{Introduction}

The goal of the present paper is to obtain bounds on the spectral radius of a sparse random matrix. Sparse random matrices arise naturally as adjacency matrices of random graphs. In spectral graph theory, obtaining precise bounds on the locations of the extreme eigenvalues, in particular on the spectral gap, is of fundamental importance and has attracted much attention in the past thirty years. See for instance \cite{Chu,HLW06,Alo98} for reviews.

The problem of estimating the spectral radius of a random matrix has a long history, starting with the seminal work of F{\"u}redi and Koml{\'o}s \cite{KF}, subsequently improved by Vu \cite{VuLargestEig}.
For the simple case of the Erd\H{o}s-R\'enyi graph $G(n,d/n)$, where each edge of the complete graph on $n$ vertices is kept with probability $d/n$ independently of the others, it is shown in \cite{KF,VuLargestEig} that the smallest and second-largest eigenvalues of the adjacency matrix converge to the edges of the support of the asymptotic eigenvalue distribution provided that $d / (\log n)^4 \to \infty$. In this paper we derive quantitative bounds for the extreme eigenvalues, which are sharp for a large class of sparse random matrices, including $G(n,d/n)$ for small enough expected degree $d$. An immediate corollary of these bounds is that the extreme eigenvalues converge to the edges of the support of the asymptotic eigenvalue distribution provided that $d / \log n \to \infty$.
Together with the companion paper \cite{bbkER}, where we analyse the extreme eigenvalues in the complementary regime $d / \log n \to 0$, this establishes a crossover in the behaviour of the extreme eigenvalues of the Erd\H{o}s-R\'enyi graph. The location $d \asymp \log n$ of this crossover for the extreme eigenvalues parallels the well-known crossover from connected to disconnected graphs. 

Our results hold for a very general class of sparse random matrices with independent mean-zero entries. In particular, we also obtain bounds for the extreme eigenvalues of inhomogeneous Erd\H{o}s-R\'enyi graphs and stochastic block models. They hold for Hermitian random matrices, corresponding to adjacency matrices of undirected random graphs, as well as for non-Hermitian random matrices with independent entries, corresponding to adjacency matrices of directed random graphs.

 In their seminal work,  F{\"u}redi and Koml{\'o}s \cite{KF} bound the spectral radius of a random matrix by estimating  the expected trace of  high powers of this matrix. The technical challenge of this strategy is to derive a sharp upper bound on the number of simple walks with given combinatorial properties. The novelty of our approach lies in a new use of the nonbacktracking matrix. Nonbacktracking matrices and combinatorial estimates on nonbacktracking walks have proved to be powerful tools to study largest eigenvalues of some random matrices, see \cite{MR1137767,MR2377835,bordenaveCAT}.  Indeed, counting non-backtracking walks turns out to be significantly simpler than counting  simple walks. Moreover, on regular graphs, the nonbacktracking walks are counted by polynomials of the adjacency matrix, and the Ihara-Bass formula  (see e.g.\ \cite{MR1749978}) implies a simple relation between the spectrum of the adjacency matrix and the nonbacktracking matrix. In this paper, we extend this strategy beyond regular graphs.  Let $H$ denote the sparse random matrix we are interested in, and $B$ its associated nonbacktracking matrix (defined in Definition \ref{def:B} below). Our proof consists of two main steps: a deterministic step (i) and a probabilistic step (ii).
\begin{enumerate}
\item
The first step is an estimate of the spectral radius of $H$ in terms of the spectral radius of $B$, summarized in Theorem \ref{thm:From_NBmatrix_to_matrix}. This step also requires bounds on the $\ell^2 \to \ell^\infty$ and $\ell^1 \to \ell^\infty$ norms of $H$, which are typically very easy to obtain by elementary concentration results. The main algebraic tool behind this step is an Ihara-Bass-type formula given in Lemma \ref{le:IB} below. Using this lemma we obtain an estimate relating the eigenvalues of $B$ and $H$, Proposition \ref{prop:AIB} below, from which Theorem \ref{thm:From_NBmatrix_to_matrix} easily follows.
\item
The second step is an estimate of the spectral radius of $B$, summarized in Theorem \ref{thm:rhoB} below. Our starting point is the classical F{\"u}redi-Koml{\'o}s-approach of estimating $\E \tr B^\ell B^{*\ell}$ for $\ell \gg \log n$, which may be analysed by counting walks on multigraphs. Our main result here is Proposition \ref{prop:BB_bound}. The argument is based on a new soft combinatorial argument which revisits a reduction of walks introduced in Friedman \cite{MR1137767}. 
\end{enumerate}

Our main result for Hermitian sparse matrices, Theorem \ref{thm:norm_H}, follows from Theorems \ref{thm:From_NBmatrix_to_matrix} and \ref{thm:rhoB} combined with classical concentration estimates. The proof of our main result for non-Hermitian sparse matrices, Theorem \ref{thm:norm_hH}, is proved by an argument analogous to that used to prove Theorem \ref{thm:rhoB}; the key observation here is that the independence of the entries of $H$ automatically results in a nonbacktracking constraint in the graphical analysis of $\E \tr H^\ell H^{*\ell}$.

In \cite{BandeiraVanHandel}, an approach to estimating spectral radii of random matrices is developed by comparison of the spectral radii of general inhomogeneous random matrices to those of corresponding homogeneous random matrices.
After our preprint appeared online, this approach was significantly extended in \cite{HLY17} to cover, among other things, sparse random matrices.
For $G(n,d/n)$ in the regime $d / \log n \to \infty$, both our approach and that of \cite{BandeiraVanHandel, HLY17} yield bounds that are sharp to leading order, although our bound on the subleading error is better than that of \cite{BandeiraVanHandel, HLY17}. In the complementary very sparse regime $d / \log n \to 0$ our bounds are sharp up to a universal constant while those of \cite{BandeiraVanHandel, HLY17} are not. See Remark \ref{rem:very sparse regime} below for more details.

We conclude this introduction with a summary of the main contributions of our paper.
\begin{enumerate}
\item
We obtain bounds on the spectral radius of a large class of sparse random matrices. For $G(n,d/n)$, our leading order bound is sharp in the regime $d / \log n \to \infty$ and sharp up to a universal constant in the regime $d / \log n \to 0$.
\item
We establish optimal bounds at leading order for the spectral radius of sparse non-Hermitian matrices  that are of mean-field type, which means that any $\abs{H_{ij}}$ is a.s.\ much smaller than the $\ell^2$ row norm $\sqrt{\max_i \sum_{j} \E \abs{H_{ij}}^2}$. 
\item
We establish optimal bounds at leading order for the spectral radius of the nonbacktracking matrix in the regime where the ratio of the $\ell^2 \to \ell^\infty$ and $\ell^1 \to \ell^\infty$ norms grows. Such bounds are of some independent interest notably in view of recent applications of non-backtracking matrices in community detection; see \cite{spec_red}. Moreover, our estimate on the spectral radius of the nonbacktracking matrix is a key ingredient in the subsequent analysis \cite{ADK} of the crossover at $d \asymp \log n$.
\item
We establish a very simple and general relationship between the spectral radii of a Hermitian matrix and its nonbacktracking version, which we believe can be of some use in other contexts as well.
\item
Our proof is simple and fully self-contained, and in particular does not need a priori bounds on the spectral radius of a reference matrix.
\end{enumerate}

\paragraph{Acknowledgements}
We are grateful to Sasha Sodin for pointing out and sharing with us a simplification in the proof of Proposition \ref{prop:AIB}. We would like to thank Ramon van Handel for many helpful discussions on the articles \cite{BandeiraVanHandel, HLY17}.
C.\ B.\ is supported by grants ANR-14-CE25-0014 and ANR-16-CE40-0024-01. A.\ K.\ gratefully acknowledges funding from the European Research Council (ERC) under the European Union's Horizon 2020 research and innovation programme (grant agreement No.\ 715539\_RandMat) and from the Swiss National Science Foundation through the SwissMAP grant.

\section{Main results}

For a positive integer $n$ we abbreviate $[n] \deq \{1,2,\dots, n\}$. For a square matrix $M$ we denote by $\sigma(M)$ the spectrum of $M$ and by $\rho(M) \deq \max_{\lambda \in \sigma(M)} \abs{\lambda}$ the spectral radius of $M$. Note that for Hermitian $M$ we have $\rho(M) = \norm{M} \deq \norm{M}_{2 \to 2}$, the operator norm of $M$. In some heuristic discussions on the limit $n \to \infty$, we use the usual little $o$ notation, and write $x \ll y$ to mean $x / y \to 0$.

\begin{Def} \label{def:B}
Let $H = (H_{ij})_{i,j \in [n]} \in M_n( \C)$ be a matrix with complex entries $H_{ij} \in \C$.
The \emph{nonbacktracking matrix associated with $H$} is the matrix $B = (B_{ef})_{e,f \in [n]^2} \in M_{n^2} ( \C)$ defined for $e = (i,j) \in [n]^2$ and $f =  (k,l) \in [n]^2$ by
  \begin{equation}\label{eq:defB}
 B_{e   f} \deq H_{kl}\ind{j  = k} \ind{i \ne l}\,.
 \end{equation}
\end{Def}

We shall need the $\ell^2 \to \ell^\infty$ and $\ell^1 \to \ell^\infty$ norms of a matrix $H \in M_n(\C)$, defined respectively as
\begin{equation} \label{def_norms_H}
\norm{H}_{2\to\infty} \deq \max_i  \sqrt{\sum_j |H_{ij}|^2}\,, \qquad \norm{H}_{1\to\infty} \deq \max_{i,j} \abs{H_{ij}}\,.
\end{equation}
The following deterministic result estimates the spectral radius of an arbitrary Hermitian matrix in terms of the spectral radius of its nonbacktracking matrix.

\beg{Th}\label{thm:From_NBmatrix_to_matrix}
For $x \geq 0$ define $f(x)$ through $f(x) = x  + x^{-1}$ if $x \geq 1$ and $f(x) = 2$ if $0 \leq x \leq 1$. Let $H \in M_n(\C)$ be a Hermitian matrix with associated nonbacktracking matrix $B$. Then
$$
\norm{H} \leq \norm{H}_{2\to\infty} \, f \pbb{\frac{\rho(B)}{ \norm{H}_{2\to\infty}}}  + 7 \norm{H}_{1\to\infty}\,.
$$
\en{Th}

Using $f(x) \leq 2 + (x-1)_+^2 $ we immediately deduce the following result.

\begin{cor}\label{cor:From_NBmatrix_to_matrix}
If $H$ is Hermitian with associated nonbacktracking matrix $B$ then $$\|H\|\le 2\norm{H}_{2\to\infty} +\f{(\rho(B)-\norm{H}_{2\to\infty} )_+^2}{\norm{H}_{2\to\infty} }+7\norm{H}_{1\to\infty}\,.$$
\en{cor}

We shall study the spectral radius  $\rho(B)$ for the following class of random matrices.
\begin{assum} \label{ass:H}
Let $H \in M_n(\C)$ be a Hermitian random matrix whose upper triangular entries $(H_{ij})_{1 \leq i \leq j \leq n}$ are independent mean-zero random variables. Moreover, suppose that there exist $q >0$ and $\kappa \geq 1$ such that
\begin{equation} \label{H_moment_ass}
\max_i\sum_j \E |H_{ij}|^2 \le 1\,, \qquad \max_{i,j} \E |H_{ij}|^2 \leq \frac{\kappa}{n}\,, \qquad \max_{i,j} \abs{H_{ij}} \leq \frac{1}{q} \text{ a.s.}
\end{equation}
\end{assum}

For the interpretation of these parameters, consider first the simple case where $A$ is the adjacency matrix of the homogeneous Erd\H{o}s-R\'enyi graph $G(n,d/n)$ with $1 \leq d \leq n$. Define $H \deq d^{-1/2} (A - \E A)$. Then it is easy to check that Assumption \ref{ass:H} holds with $q = \sqrt{d}$ and $\kappa = 1$. Thus, the parameter $q$ controls the sparsity of $A$ (the smaller it is, the sparser $A$ may be). Moreover, if the graph associated with $A$ is inhomogeneous, the variances $\E |H_{ij}|^2$ depend on $i$ and $j$, and $\kappa$ has to be chosen larger than $1$. Thus, $\kappa$ controls the structure or inhomogeneity of $A$ (the closer to $1$ it is, the less structured, or more homogeneous, $A$ is).

\begin{Th} \label{thm:rhoB}
There are universal constants $C, c > 0$ such that the following holds. Suppose that $H$ satisfies Assumption \ref{ass:H} and that $B$ is the nonbacktracking matrix associated with $H$. Then for $1 \vee q \leq n^{1/{10}} \kappa^{-1/{9}}$ and $\epsilon \geq 0$ we have 
\begin{equation*}
\bb P \pb{\rho(B) \geq 1 + \epsilon} \leq C n^{3 - c  q \log (1+\eps)}\,.
\end{equation*}
\end{Th}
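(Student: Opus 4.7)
The plan is to bound $\rho(B)$ via the trace method. Since $\rho(B)^{2\ell} \leq \norm{B^\ell}^2 \leq \tr(B^\ell B^{*\ell})$ for any positive integer $\ell$, Markov's inequality gives
\[
\bb P(\rho(B) \geq 1 + \eps) \;\leq\; (1+\eps)^{-2\ell}\, \E \tr(B^\ell B^{*\ell}).
\]
The bulk of the work is thus to establish a uniform polynomial bound on $\E\tr(B^\ell B^{*\ell})$ for $\ell$ in a suitable range $\ell \leq c_0 q\log n$ whose admissibility is guaranteed by the hypothesis $2 \vee q \leq n^{1/13}\kappa^{-1/12}$. Once a bound of the form $\E\tr(B^\ell B^{*\ell}) \leq C n^3$ is in hand, choosing $\ell$ of order $q\log n$ converts $(1+\eps)^{-2\ell}$ into $n^{-cq\log(1+\eps)}$, yielding the claimed tail bound.

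Expanding the trace,
\[
\E\tr(B^\ell B^{*\ell}) = \sum \E\qB{B_{e_0 e_1}\cdots B_{e_{\ell-1}e_\ell}\, \overline{B_{e_0 f_1}\cdots B_{f_{\ell-1}e_\ell}}},
\]
the sum being over pairs of nonbacktracking walks on directed edges with common starting directed edge $e_0$ and common ending directed edge $e_\ell$. Concatenating (reading the second walk in reverse), each summand corresponds to a closed walk of length $2\ell$ on $[n]$ that is nonbacktracking on each of its two halves, with backtracking permitted only at the two ``turning'' directed edges $e_0$ and $e_\ell$. Independence and centering of the entries of $H$ force every edge of the multigraph traced by such a walk to be used at least twice for a nonzero contribution; for such a walk with $e$ distinct edges of multiplicities $k_a \geq 2$ summing to $2\ell$, Assumption~\ref{ass:H} yields $|\E\prod_a H_a^{k_a}| \leq (\kappa/n)^e (1/q)^{2\ell - 2e}$, and hence $e \leq \ell$.

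I would then organize the sum by isomorphism type (the \emph{shape}) of the walk on $[n]$. A shape with $v$ vertices, $e$ edges, and excess $\mu \deq e - v + 1 \geq 0$ admits at most $n^v$ labelings, contributing at most $n^{1-\mu}\kappa^e q^{-2(\ell-e)}$ times the combinatorial multiplicity of length-$2\ell$ walks on this shape with the prescribed nonbacktracking structure. The delicate task is to bound the latter combinatorial quantity. My plan is to adapt Friedman's reduction of walks~\cite{MR1137767}: iteratively contract tree-like backtrack-free excursions of the walk until one reaches a canonical skeleton whose underlying multigraph has excess at most that of the original walk and whose length is controlled by it. The number of re-inflations of a given skeleton back into a length-$2\ell$ walk can be bounded by elementary counting with polynomial-in-$\ell$ factors at each contraction step. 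Summing over shapes and excess levels should then give the required bound $\E\tr(B^\ell B^{*\ell}) \leq C n^3$ in the advertised range of $\ell$.

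The main difficulty is precisely this last step: one must carry out Friedman's reduction quantitatively enough that (i) the combinatorial multiplicity of walks on each shape can be bounded uniformly in the excess, and (ii) the sum over excess levels and over the polynomial-in-$\ell$ re-inflation factors is dominated by the sparsity-driven suppression $q^{-2(\ell-e)}$, so that only walks with $e$ close to $\ell$ (which are necessarily ``tree-like'' in a suitable sense) survive. The nonbacktracking constraint on \emph{both} halves of the walk -- not merely on one -- is what makes this feasible: it rules out many walk patterns that would otherwise have to be accounted for in the corresponding trace of $H^\ell H^{*\ell}$, and permits the polynomial-in-$n$ bound $C n^3$ throughout the range $\ell \leq c_0 q\log n$. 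With this bound in place, the tail estimate for $\rho(B)$ follows at once from the Markov bound above by the choice of $\ell$ described in the first paragraph.
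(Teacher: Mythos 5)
Your overall route is the same as the paper's: bound $\rho(B)^{2\ell}\le \tr B^\ell B^{*\ell}$, apply Markov, and prove a polynomial bound on $\E\tr B^\ell B^{*\ell}$ for $\ell\asymp q\log n$ by a walk count in the spirit of Friedman's reduction (this is exactly Proposition \ref{prop:BB_bound} and Section \ref{sec:pfTh14}). However, there are two genuine gaps. First, your per-shape estimate is too weak: you bound each summand by $(\kappa/n)^e\,q^{-2(\ell-e)}$ (entrywise moment bound on every edge) times $n^v$ labelings, i.e.\ $n^{1-\mu}\kappa^{e}q^{-2(\ell-e)}$ with $\mu=e-v+1$. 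The theorem allows $\kappa$ as large as roughly $n^{12/13}$, and for a tree-like shape with $e=\ell\asymp q\log n$ edges each used twice your bound is $n\,\kappa^{\ell}$, which is super-polynomial in $n$ and cannot be summed to $Cn^3$. The paper avoids this in Lemma \ref{lem:sum_gamma_Gamma} by summing the labelings vertex-by-vertex along a spanning tree: tree edges are handled with the row-sum bound $\max_i\sum_j\E|H_{ij}|^k\le q^{-(k-2)}$ and only the $g$ excess edges pay the factor $\kappa/(nq^{k-2})$, giving $n^{1-g}\kappa^{g}q^{2e-2\ell}$ with $\kappa^{g}$ rather than $\kappa^{e}$. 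This distinction is essential for the stated range of $\kappa$.

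Second, the heart of the argument --- the quantitative control of the number of walk shapes and of walks on each shape --- is left as a plan ("should then give the required bound"). In the paper this is where almost all the work lies: the nonbacktracking constraint is used to collapse degree-two vertices into a multigraph $U$ with weighted edges and a closed path $\zeta$, all of whose internal vertices have degree at least three; then $\abs{E(U)}\le 3g+1$ and $\abs{V(U)}\le 2g+2$ (Lemma \ref{lem:EU_bounds}), the maximal degree of $U$ is at most $2g+2$ (Lemma \ref{lem:deg_max}), so the number of paths $\zeta$ of length $m$ is at most $(2g+2)^m$, the number of weight families $k$ is a binomial coefficient, and the resulting geometric-type sum over the genus $g$ and the length $m$ is only convergent under the precise condition \eqref{ell_assumptions} relating $\ell$, $q$, $\kappa$ and $n$ (which is where the exponent $1/13$ in the hypothesis comes from). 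Without carrying out this reduction and the final optimization over $g$, the claim $\E\tr B^\ell B^{*\ell}\le Cn^2\ell^8q^2$ (and hence the tail bound) is not established; your last paragraph correctly identifies this as the main difficulty but does not resolve it.
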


\begin{rmk} \label{rem:q_bound}
Note that if Assumption \ref{ass:H} holds for some $q >0$ then it also holds for any $0 < \hat q \leq q$. Hence, if $\kappa^{1/9}  \leq n^{1/10}$, we may apply Theorem \ref{thm:rhoB} to $\hat q = q \wedge  (n^{1/{10}} \kappa^{-1/{9}})$. 
\end{rmk}

From Theorems \ref{thm:From_NBmatrix_to_matrix} and \ref{thm:rhoB} it is not hard to conclude an upper bound on the norm of $H$. We first note that general concentration inequalities imply that there exists a universal constant $c >0$ such that, if $H$ satisfies Assumption \ref{ass:H} then for all $t \geq 0$ we have
\begin{equation}\label{eq:BLM}
\bb P \pb{ \absb{ \norm{H}  - \bb E \norm{H}  }  \geq t }  \leq 2 e^{ - c q^2 t^2}\,.
\end{equation}
See for instance \cite[Examples 3.14 and 6.8]{BLM}; the concentration inequality in \cite[Example 6.8]{BLM} is given for the largest eigenvalue $\lambda_{\max} (H) = \sup_{ \| x \|_2 = 1}  \langle x , H x \rangle$, but the same argument applies to $\|H \| = \sup_{ \| x \|_2 = \| y \|_2 =  1} \langle y , H x \rangle$.

The estimate \eqref{eq:BLM} shows that, up to subgaussian fluctuations of order $1/q$, it is sufficient to control the expectation of the norm of $H$. Our main result in this direction is the following. 
 \begin{Th} \label{thm:norm_H}
There is a universal constant $C > 0$ such that the following holds. Suppose that $H$ satisfies Assumption \ref{ass:H}. Then for $1 \vee q \leq n^{1/{10}} \kappa^{-1/{9}}$, we have
 \begin{equation} \label{est_EH}
\bb E  \norm{H}  \leq 2 + C\frac{\eta}{\sqrt{1 \vee \log\eta }}\,, \qquad \hbox{with } \quad  \eta\deq\f{\sqrt{\log n}}{q}\,.
\end{equation}
In particular, 
\begin{equation} \label{est_EH_weak}
\bb E  \norm{H}  \leq 2 + C  \frac{\sqrt{\log n}}{q}\,.
\end{equation}
Moreover, under the same hypotheses, if in addition $q \geq C$ and $\max_{i} \sum_{j} \bb E |H_{ij} |^2 = 1$ then 
 \begin{equation}\label{eq:mainH}
\bb E  \norm{H}  \leq \bb E \norm{H}_{2\to\infty} \PAR{ 2 +  \frac C q }\,.
\end{equation}
\end{Th}

Theorem \ref{thm:norm_H} shows that with high probability, $\| H \| \leq  2 +o (1)$ as soon as $q^2  \gg \log n$ and $\kappa \ll n^{12/13}$ (See also Remark \ref{rem:kappa} below).
As illustrated in \cite[Theorem 2.2]{Khorunzhy2001} and \cite[Corollary 1.4]{bbkER}, 
this statement is sharp. We remark that for entries with symmetric distributions, a related bound (without sharp error bounds) was obtained in \cite[Theorem 2.1]{Khorunzhy2001}. (However, the assumption of symmetric distributions rules out random graphs.)

\begin{rmk} \label{rem:very sparse regime}
Theorem \ref{thm:norm_H} also provides an estimate for the very sparse regime $q^2 \leq \log n$. Writing $q^2 = \eta^{-2} \log n$ for $\eta \geq 1$ we find from Theorem \ref{thm:norm_H} and \eqref{eq:BLM} that
\begin{equation*}
\norm{H} \leq C \frac{\eta}{\sqrt{\log \eta}}
\end{equation*}
with high probability. On the other hand, in \cite{bbkER} we proved that for $A$ the adjacency matrix of $G(n,d/n)$ with $d = \eta^{-2} \log n$ and $H = (A - \E A)/q$ we have
\begin{equation*}
\norm{H} \sim \frac{\eta}{\sqrt{2 \log \eta}}
\end{equation*}
with high probability for $\eta \gg 1$ (where $a \sim b$ denotes that $a/b \to 1$ as $n \to \infty$). Thus, in the very sparse regime $q^2 \leq \log n$ and for general sparse random matrices, Theorem \ref{thm:norm_H} yields bounds whose dependence on $n$ and $d$ is optimal up to a universal constant for the example of the Erd\H{o}s-R\'enyi graph.
\end{rmk}

\begin{rmk}
For any Hermitian matrix $H$ we always have $\norm{H}_{2 \to \infty} \leq \norm{H}$. The estimate \eqref{eq:mainH} may be regarded as a probabilistic counterpart to this statement, saying that with high probability the converse estimate is also true up to a constant.
\end{rmk}

\begin{rmk}
Stated explicitly without referring to Assumption \ref{ass:H}, for instance the estimate \eqref{est_EH_weak} states that for any Hermitian matrix $H$ with independent centred entries, if
\begin{equation*}
\max_{i,j} \E \abs{H_{ij}}^2 \cdot \pbb{\max_i \sum_j \E \abs{H_{ij}}^2}^{7/2} \leq n^{-1/10} \normB{\max_{i,j} \abs{H_{ij}}}_{L^\infty}^{9}
\end{equation*}
 then 
\begin{equation*}
\E \norm{H} \leq 2 \sqrt{\max_i \sum_j \E \abs{H_{ij}}^2} + C \normB{\max_{i,j} \abs{H_{ij}}}_{L^\infty} \sqrt{\log n}\,.
\end{equation*}
The same remains valid if the quantity $\normb{\max_{i,j} \abs{H_{ij}}}_{L^\infty}$ is replaced by any larger number.
\end{rmk}

Our techniques also apply to non-Hermitian matrices with independent entries.

\begin{Th} \label{thm:norm_hH}
Let $H \in M_n(\C)$ be a random matrix whose entries $(H_{ij})_{1 \leq i , j \leq n}$ are independent mean-zero random variables. Moreover, suppose that there exist $q >0$ and $\kappa \geq 1$ such that \eqref{H_moment_ass} holds. Then for $1 \vee q \leq n^{1/{10}} \kappa^{-1/{9}}$ and $\epsilon\geq 0$ we have 
\begin{equation*}
\bb P \pb{\rho(H) \geq 1 + \epsilon} \leq C n^{2 - c q \log ( 1+ \eps)}\,,
\end{equation*} for some universal positive constants $C,c$.
\end{Th}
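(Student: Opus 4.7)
The plan is a direct moment method mirroring the proof of Theorem~\ref{thm:rhoB}. For every integer $\ell \ge 1$ one has the deterministic inequality
\begin{equation*}
\rho(H)^{2\ell}=\rho(H^\ell)^2 \le \|H^\ell\|^2 = \bigl\|H^\ell (H^\ell)^*\bigr\| \le \tr\bigl(H^\ell H^{*\ell}\bigr),
\end{equation*}
and hence Markov's inequality gives
\begin{equation*}
\bb P\bigl(\rho(H)\ge 1+\epsilon\bigr) \le (1+\epsilon)^{-2\ell}\,\bb E\tr\bigl(H^\ell H^{*\ell}\bigr).
\end{equation*}
The whole task therefore reduces to bounding $\bb E\tr(H^\ell H^{*\ell})$ and then optimizing over $\ell$ of order $q\log n$.

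Expanding the trace yields
\begin{equation*}
\bb E\tr\bigl(H^\ell H^{*\ell}\bigr) = \sum_{i,j\in[n]}\bb E\bigl|(H^\ell)_{ij}\bigr|^2 = \sum_{i,j}\sum_{w_1,w_2}\bb E\prod_{e\in w_1}H_e\,\prod_{e\in w_2}\overline{H_e},
\end{equation*}
where $w_1,w_2$ run over pairs of directed walks from $i$ to $j$ of length $\ell$. Here I would exploit the key point announced in the introduction: under the present hypotheses the \emph{full} family $(H_{ab})_{a,b\in[n]}$ is independent (in contrast with the Hermitian case, there is no relation between $H_{ab}$ and $H_{ba}$), so the expectation factorizes over distinct \emph{directed} edges, and since every entry is mean-zero, every directed edge appearing in $w_1\cup w_2$ must carry total multiplicity at least two. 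This oriented matching constraint plays the role of the non-backtracking constraint in the NB-matrix trace $\bb E\tr(B^\ell B^{*\ell})$ from the proof of Theorem~\ref{thm:rhoB}: a backtracking step $a\to b\to a$ in one of the walks creates the two \emph{distinct} directed edges $(a,b)$ and $(b,a)$, each of which must then be matched independently, which produces exactly the combinatorial penalty that, in the NB setting, was built in by forbidding backtracking from the start.

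Armed with this observation, I would classify the admissible walk pairs $(w_1,w_2)$ by a skeleton multigraph together with a compatible oriented pairing, invoking the same Friedman-type walk reduction carried out in the proof of Proposition~\ref{prop:BB_bound}, and bound the contribution of each equivalence class using the moment conditions $\bb E|H_{ij}|^2\le \kappa/n$ and $|H_{ij}|\le 1/q$ to pay for extra edge multiplicities. The outcome should be an estimate of the form $\bb E\tr(H^\ell H^{*\ell}) \le C n^2$ (up to a sub-exponential-in-$\ell$ correction) valid in the regime $2\vee q\le n^{1/13}\kappa^{-1/12}$ and for $\ell$ up to of order $q\log n$. Plugging this into Markov's inequality with $\ell=\lfloor c'q\log n\rfloor$ for a sufficiently small universal constant $c'>0$ then yields the claimed bound $\bb P(\rho(H)\ge 1+\epsilon)\le C n^{2-cq\log(1+\epsilon)}$.

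The main obstacle is the combinatorial last step. One must set up a precise correspondence between (a) pairs of walks on $[n]$ of length $\ell$ subject to the oriented matching constraint and (b) the pairs of non-backtracking walks on oriented edges handled in Proposition~\ref{prop:BB_bound}, so that the walk reduction transfers without loss of sharpness. I expect this to be essentially bookkeeping---tracking how each oriented pairing is realized, and how the skeleton of $(w_1,w_2)$ inherits its complexity from the two walks---but the orientation combinatorics have to be managed carefully to keep the prefactor at $n^2$ rather than an unwanted $n^3$, which is exactly what produces the improvement in the exponent of $n$ between Theorem~\ref{thm:rhoB} and the present statement.
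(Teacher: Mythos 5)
Your proposal is correct and follows essentially the same route as the paper: Markov's inequality applied to $\E \tr (H^\ell H^{*\ell})$ with $\ell \asymp q\log n$, the key observation that independence of \emph{all} entries forces every directed edge to be crossed at least twice (which substitutes for the nonbacktracking constraint), and then the directed-graph version of the walk reduction from Proposition \ref{prop:BB_bound}, exactly as in the paper's Proposition \ref{prop:nH_moment_est}. The only small discrepancy is your target bound $\E\tr(H^\ell H^{*\ell})\le Cn^2$ and the worry about orientation bookkeeping keeping the prefactor at $n^2$: the paper in fact gets the sharper $C_0\, n\,\ell^{8}q^2$, the gain of one factor of $n$ over the $B$-trace coming simply from the absence of the dangling free index $\xi_{-1}$ in the expansion of $\tr H^\ell H^{*\ell}$, though your weaker $n^2$-type bound would still give the stated estimate after adjusting the universal constant $c$ (using that the bound is trivial when $q\log(1+\eps)$ is small).
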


\begin{rmk}
In Theorems \ref{thm:rhoB}, \ref{thm:norm_H}, and \ref{thm:norm_hH}, the almost sure last condition of \eqref{H_moment_ass} can be easily relaxed by a truncation argument of the entries of $H$  (see for example \cite{MR950344}). We do not pursue this generalization any further. 
\end{rmk}

\begin{rmk} \label{rem:kappa}
It can easily be seen from the proofs that in Theorems \ref{thm:rhoB}, \ref{thm:norm_H}, and \ref{thm:norm_hH}, the   condition  $q\le n^{1/13}\ka^{-1/12}$ can be   relaxed to  $q\le n^{(1-\del_0)/12}\ka^{-1/12}$ for any fixed   $\del_0\in (0,1)$. Then, the constants involved in the statements depend on $\del_0$. Consequently, our main results hold for $\kappa \leq n^{1 - c}$ for any fixed $c > 0$. The applications to inhomogeneous Erd\H{o}s-R\'enyi graphs
in Section \ref{sec:ER} can be slightly strengthened in this direction; we shall not pursue this direction further.
Moreover, since the parameter $q$ only appears as an upper bound in \eqref{H_moment_ass}, our results clearly apply without an upper bound on $q$; see Remark \ref{rem:q_bound}.
\end{rmk}

\subsection{Overview of proofs}

We conclude this section by giving an overview of the proofs of our main results -- Theorems \ref{thm:From_NBmatrix_to_matrix}, \ref{thm:rhoB}, \ref{thm:norm_H}, and \ref{thm:norm_hH}. Theorem \ref{thm:From_NBmatrix_to_matrix} is proved in Section \ref{sec:proof_22}. The first main ingredient is a variant of the Ihara-Bass formula, Lemma \ref{le:IB}, which relates the spectra of $H$ and $B$ by characterizing the spectrum of $B$ as the singularity set of an explicit matrix function, denoted $M(\lambda) - H(\lambda)$. The second main ingredient is an analysis of the matrix function $M(\lambda) - H(\lambda)$, estimating its singularities in terms of eigenvalues of $H$, hence concluding the proof of Theorem \ref{thm:From_NBmatrix_to_matrix}. The proof of Theorems \ref{thm:rhoB} and \ref{thm:norm_H} are given in Section \ref{thm:norm_H}, where we estimate the spectral radius of $B$. The main work is to estimate $\E \tr B^{\ell} B^{* \ell}$ for large $\ell$, which is performed in Proposition \ref{prop:BB_bound}. It is proved using a variant of the classical moment method, exploiting the nonbacktracking property of $B$ to significantly reduce the combinatorics of the resulting graphs. Theorem \ref{thm:rhoB} then easily follows using Markov's inequality, and Theorem \ref{thm:norm_H} follows by an elementary argument using Theorems \ref{thm:From_NBmatrix_to_matrix} and \ref{thm:rhoB}. Finally, in Section \ref{sec:6} we prove Theorem \ref{thm:norm_hH}. Its proof is remarkably similar to that of Proposition \ref{prop:BB_bound}. Indeed, since the entries $H_{ij}$ and $H_{ji}$ for $i \neq j$ are independent, to leading order, the resulting walks on graphs do not backtrack. This results in walks analogous to those of Section \ref{thm:norm_H}, and we may take over much of the arguments developed there to conclude the proof of Theorem \ref{thm:norm_hH}.

  \section{Application to inhomogeneous Erd\H{o}s-R\'enyi graphs} \label{sec:ER}
  
 \subsection{Undirected graphs}
An important example of a matrix $H$ satisfying Assumption \ref{ass:H} is the (centred and rescaled) adjacency matrix of an inhomogeneous (undirected) Erd\H{o}s-R\'enyi random graph, where each edge $\{i,j\}$, $1\le i < j\le n$, is included with \pro $p_{ij}$, independently of the others. Let $A$ be its adjacency matrix and set $$H\deq d^{-1/2}(A-\E A)\,,$$
where
\begin{equation} \label{def_d}
d\deq \max_i\sum_j p_{ij}
\end{equation}
is the maximal expected degree.
Then for each $i,j$ and each $k\ge 2$, we have $|H_{ij}| \le 1 / \sqrt d$ and $\bb E |H_{ij}|^2 \leq  p_{ij } / d$. We set
\begin{equation} \label{def_q_er}
q \deq \sqrt{d} \wedge n^{1/{10}} \kappa^{-1/{9}}\,, \qquad \kappa \deq \f{ \max_{i,j}  p_{ij}}{d/n}\,.
\end{equation}

\begin{rmk} \label{rem:small_deg}
A scenario of particular interest is when the probabilities $p_{ij}$ are all comparable (i.e. $\kappa = O(1)$) and the typical expected degree $d$ is not too large, in the sense that $d \ll n^{1/{5}}$. In that case we have $q = \sqrt{d}$.
\end{rmk}

We deduce the following immediate  consequence of Theorem \ref{thm:norm_H}.

\beg{Th}
\label{th:OER}
Let $A$ be the adjacency matrix of an inhomogeneous Erd\H{o}s-R\'enyi graph, with $d$ and $q$ defined as in \eqref{def_d} and \eqref{def_q_er}. If $q \geq 1$ then
\be\la{eq:Th.2.1}
\frac{\bb E  \norm{ A - \bb E A}}{ \sqrt d}  \leq 2  + C\frac{\eta}{\sqrt{1 \vee \log\eta }}\,, \qquad \hbox{with } \quad  \eta\deq\f{\sqrt{\log n}}{q},
\ee
for some universal constant $C  >0$. If the maximal expected degree is not too large in the sense that
\begin{equation} \label{small_d}
  d \geq 1   \AND d^{7/2} \max_{i,j} p_{ij}\le n^{-1/10}\,,
\end{equation}
which in particular includes the scenario of Remark \ref{rem:small_deg}, then
\begin{equation}\label{eq:kjde}
\bb E  \norm{ A - \bb E A}  \leq 2 \sqrt d  + C \sqrt{ \frac{\log n} { 1 \vee \log \pb{\frac{\log n}{ d}} }} \,.
\end{equation}
\en{Th}

 In \cite{MR2155709,MR2853072}, it is proved that 
 $\norm{H} \le C$ if $d $ is at least of order $ \log n$. Theorem \ref{th:OER}  retrieves this result and states also that    
 $\norm{H} \leq 2 + o(1) $ as soon as $d \gg \log n$. 
 In the homogenous  case (i.e.\ when for all $i,j$, $p_{ij}=d/n$), it is well known \cite{KKPS,KnowlesBenaych} that   the empirical distribution of the eigenvalues of $H$ converges weakly to the semi-circular law with support $[-2,2]$ as soon as $d\gg 1$. Theorem \ref{th:OER} then also gives the   convergence of the extreme eigenvalues to the boundary of $[-2,2]$ as soon as $d\gg \log n$.
In the companion paper \cite{bbkER}, we study the largest eigenvalues of $H$ in the regime $d \ll \log n$, showing that a crossover occurs at $d \asymp \log n$. Interestingly, in at least the regime $d \ll \log n$,  \cite[Corollary 1.4]{bbkER} implies that the upper bound \eqref{eq:kjde} is sharp up to the multiplicative constant $C$; see Remark \ref{rem:very sparse regime}.

Theorem \ref{th:OER} can be used in statistical inference on graphs, where one wishes to infer information about $\E A$ from a single observation of $A$.  Weyl's inequality for eigenvalues implies that, for any integer $1 \leq k \leq n$, the $k$-th largest eigenvalue (counting multiplicities) of $\bb E A$ and $A$ differ by at most $\norm{ A - \bb E A}$ (see for example \cite{MR1477662}). Hence, 
Theorem \ref{th:OER} shows that the location of an eigenvalue $\lambda$ of $\bb E A $ can be effectively estimated from the spectrum of $A$ as soon as $|\lambda| / \sqrt d$ is much larger than the right-hand side of \eqre{eq:Th.2.1}.
In particular, if $|\lambda|$ is of order $d$, the condition reads \be\la{inferencecond:d}d \gg \sqrt{\f{\log n}{\log\log n}}\,.\ee
As mentioned above, \cite{MR2155709,MR2853072} require the stronger condition $d \gg \log n$. We also recall that classical tools from perturbation theory assert that also the corresponding eigenspaces of $A$ and $\bb E A$ are close when $\norm{ A - \bb E A}$ is small compared to the spectral gap around the eigenvalue of $A$ under consideration (for instance, the Davis-Kahan Theorem \cite{MR0264450} gives a precise quantitative statement of this kind). We may summarize our discussion with the following corollary,
which can be used in statistical inference on graphs when \eqre{inferencecond:d} is satisfied. For simplicity of presentation, we focus on the case where the expected maximal degree is not too large, that is when \eqref{small_d} holds.

\begin{cor}\la{CorInference1016}
Let $A$ be the adjacency matrix of an inhomogeneous Erd\H{o}s-R\'enyi graph, with $d$ defined as in \eqref{def_d}. Suppose that \eqref{small_d} holds and $n \geq 3$. Then for some universal constant $C  >0$ and for any $0 < \eps < 1$, if \be\la{eq:lbd291216}d \geq\f{C}{\eps^2} \sqrt{\f{\log n}{ \log\log n}}\ee then
$$
  \norm{ A - \bb E A}  \leq \eps d\,,
$$
with probability at least $1 - 2 \exp ( - \eps^2 d^3 / C)$. 
\end{cor}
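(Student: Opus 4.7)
The plan is to combine the expectation bound from Theorem \ref{th:OER} with the subgaussian concentration inequality \eqref{eq:BLM}. Concretely, I would first apply \eqref{eq:kjde} to bound $\bb E \|A - \bb E A\|$; then verify that under the hypothesis $d \geq (C/\eps^2)\sqrt{\log n / \log\log n}$ this bound is at most $\eps d / 2$; and finally upgrade this expectation bound to a high-probability statement via \eqref{eq:BLM} applied to the rescaled matrix $H \deq d^{-1/2}(A - \bb E A)$.

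For the first step, Theorem \ref{th:OER} together with the standing hypothesis \eqref{small_d} gives
\[
\bb E \|A - \bb E A\| \leq 2\sqrt d + C\sqrt{\frac{\log n}{1 \vee \log(\log n / d)}}\,.
\]
I would then show that each summand is at most $\eps d / 4$. The inequality $2\sqrt d \leq \eps d / 4$ is equivalent to $d \geq 64 / \eps^2$, which follows from the hypothesis since $\sqrt{\log n / \log\log n} \geq 1$ for $n \geq 3$. For the second summand I distinguish two regimes. When $d \geq \log n / e$, the denominator saturates at $1$ and the term reduces to $C\sqrt{\log n}$, which is $\leq \eps d / 4$ directly from the hypothesis. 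When $d \leq \log n / e$, the hypothesis forces $\log n / d \geq (\eps^2 / C) \sqrt{\log n \cdot \log\log n}$, so for $n \geq 3$ we have $\log(\log n / d) \geq \tfrac{1}{3} \log\log n$; substituting back gives
\[
\sqrt{\frac{\log n}{\log(\log n / d)}} \leq C'\sqrt{\frac{\log n}{\log\log n}} \leq \frac{\eps d}{4}
\]
upon choosing the constant $C$ in the hypothesis sufficiently large (in terms of the constant from \eqref{eq:kjde}).

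For the concentration step, the matrix $H$ satisfies Assumption \ref{ass:H} with $q = \sqrt d$ and $\kappa$ controlled via \eqref{small_d}, so \eqref{eq:BLM} yields
\[
\bb P\bigl(\, \bigl| \|H\| - \bb E \|H\| \bigr| \geq t\,\bigr) \leq 2\, e^{-c\,d\,t^2}\,.
\]
Taking $t = \eps \sqrt d / 2$ and using $\bb E \|H\| \leq \eps \sqrt d / 2$ from the previous step, we obtain $\|H\| \leq \eps \sqrt d$ (equivalently $\|A - \bb E A\| \leq \eps d$) with the claimed high probability after absorbing universal constants.

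The main obstacle is the middle step: the expression $\sqrt{\log n / (1 \vee \log(\log n / d))}$ has two qualitatively different regimes, and one must verify that the single hypothesis $d \geq (C/\eps^2)\sqrt{\log n/\log\log n}$ handles both uniformly. The decisive input is the $1 \vee \log(\log n/d)$ denominator in \eqref{eq:kjde}, which is precisely what allows the condition on $d$ to weaken from $d \gg \sqrt{\log n}$ to $d \gg \sqrt{\log n / \log\log n}$; verifying that the hypothesis yields $\log(\log n/d) \gtrsim \log\log n$ in the relevant regime is the only non-routine calculation. The expectation and concentration inputs are then direct plug-ins of Theorem \ref{th:OER} and \eqref{eq:BLM}.
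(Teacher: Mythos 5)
Your overall architecture mirrors the paper's (bound $\bb E\|A-\bb E A\|$ via \eqref{eq:kjde}, then concentrate via \eqref{eq:BLM}), but there is a genuine gap in your handling of the second summand, specifically in an intermediate regime that your two-case split does not cover.

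The problematic step is: \emph{``When $d \leq \log n / e$, the hypothesis forces $\log n / d \geq (\eps^2 / C) \sqrt{\log n \cdot \log\log n}$, so for $n \geq 3$ we have $\log(\log n / d) \geq \tfrac{1}{3} \log\log n$.''} Two things go wrong here. First, the direction is reversed: the hypothesis $d \geq (C/\eps^2)\sqrt{\log n/\log\log n}$ is a \emph{lower} bound on $d$, hence yields an \emph{upper} bound $\log n/d \leq (\eps^2/C)\sqrt{\log n\cdot\log\log n}$, not a lower bound. Second, and more seriously, the target inequality $\log(\log n/d)\geq\tfrac 13\log\log n$ is simply false across the full range $d\leq\log n/e$: it is equivalent to $d\leq(\log n)^{2/3}$, but you are allowing $d$ up to $\log n/e$. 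For instance, with $d=\log n/e$ one has $\log(\log n/d)=1$, while $\log\log n\to\infty$; and the hypothesis on $d$ is readily satisfiable in this configuration for large $n$. So the intermediate regime $\sqrt{\log n\cdot\log\log n}\lesssim d\leq\log n/e$, where $\log(\log n/d)$ is bounded yet $\log\log n$ is large, is left unaddressed.

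The paper avoids this by working with $\gamma$ defined through $d=\gamma\sqrt{\log n/\log\log n}$ and proving the uniform bound $\max\bigl\{d^{-1},\,\tfrac{d^{-2}\log n}{1\vee\log(\log n/d)}\bigr\}\leq C_1\max\{\gamma^{-1},\gamma^{-2}\}$ by a \emph{three}-way case split. Your first and third cases correspond (roughly) to the paper's first ($d\geq e^{-1}\log n$) and third ($d\leq\sqrt{\log n\log\log n}$) regimes; what is missing is the paper's middle regime $\sqrt{\log n\log\log n}\leq d\leq e^{-1}\log n$, which the paper handles by writing $d=\sqrt{x}\log n$ and using that $-\log x\geq 2$ there, so that $\tfrac{d^{-2}\log n}{\log(\log n/d)}=\tfrac{-2}{x\log x\log n}\leq\tfrac{1}{x\log n}=\tfrac{\log\log n}{\gamma^2}\leq\gamma^{-1}$ once one notices $\gamma\geq\log\log n$ in this range. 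To repair your argument, you would need to insert this middle case (or an equivalent computation); as written, the two-regime split is insufficient. The concentration step at the end is otherwise the same as the paper's.
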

\bpr
This is an elementary argument using $\eqre{eq:kjde}$, the condition \eqre{eq:lbd291216}, and \eqref{eq:BLM}. We omit the details.
\epr
  
   \subsection{Directed graphs}\la{sec:OG}
  An important example of a matrix $H$ satisfying the assumptions of Theorem \ref{thm:norm_hH} is the (centred and rescaled) adjacency matrix of an inhomogeneous directed Erd\H{o}s-R\'enyi random graph, where each directed edge $(i,j)$, $1\le i , j\le n$, is included with \pro $p_{ij}$, independently of the others. Let $A$ be its adjacency matrix and set $$H\deq d^{-1/2}(A-\E A)\,,$$
  with $d$ given in \eqref{def_d}.
We deduce the following immediate  consequence of Theorem  \ref{thm:norm_H}.

\beg{Th}
\label{th:ER}
Let $A$ be the adjacency matrix of an inhomogeneous directed Erd\H{o}s-R\'enyi graph, with $d$ and $q$ defined as in \eqref{def_d} and \eqref{def_q_er}. If $q \geq 1$ then
\begin{equation*}
\bb P \pb{\rho\pb{d^{-1/2}(A - \E A)} \geq 1 + \epsilon} \leq C n^{2 - c q \log ( 1+ \eps)}\,,
\end{equation*} for any $\epsilon \geq 0$ and some universal positive constants $C,c$.
If the maximal expected degree is not too large in the sense that \eqref{small_d} holds, then we have
\begin{equation*}
\bb P \pb{\rho\pb{d^{-1/2}(A - \E A)} \geq 1 + \epsilon} \leq C n^{2 - c \sqrt{d} \log ( 1+ \eps)}
\end{equation*}
for all $\epsilon \geq 0$.
\en{Th}

The following corollary can be deduced directly, using the version of the Bauer-Fike theorem given in \cite{BLMNBNRRG}.

\beg{cor} Suppose that the assumptions of Theorem \ref{th:ER} hold, and suppose moreover that  for all $i,j$ we have $p_{ij}=d/n$ where $d \leq n^{1/{5}}$. Then for any $\eps\ge 0$ \st $2(1+\eps)<\sqrt{d}$, with \pro at least $1-C n^{2 - c \sqrt{d} \log ( 1+ \eps)}$, $A$ has exactly one eigenvalue at distance at most $(1+\eps )\sqrt{d}$ from $d$ and all other eigenvalues with modulus at most $(1+\eps )\sqrt{d}$.
\en{cor}

\section{Comparison of spectra of $H$ and $B$ and proof of Theorem \ref{thm:From_NBmatrix_to_matrix}} \label{sec:proof_22}

The rest of this paper is devoted to the proofs of Theorems \ref{thm:From_NBmatrix_to_matrix}, \ref{thm:rhoB}, \ref{thm:norm_H}, and \ref{thm:norm_hH}.
  
\subsection{An Ihara-Bass-type formula}

The following lemma is a variant of the Ihara-Bass formula. It is inspired by \cite{bethehessian} and generalizes ideas from \cite{MR3174850}. It is essentially contained in Theorem 2 of \cite{WatFuk}; for the convenience of the reader and to keep this paper self-contained, we give the simple proof.

\begin{lemma}\label{le:IB}
Let $H \in M_{n} ( \C)$ with associated nonbacktracking matrix $B$ and let  $\lambda \in \C$ satisfy
$\lambda^2 \neq H_{ij} H_{ji}$ for all $i,j \in [n]$.
Define the matrices $H(\lambda)$ and $M(\lambda) = \diag( m_i(\lambda))_{i \in [n]}$ through
\begin{equation} \label{def_AM}
H_{ij}(\lambda) \;\deq\; \frac{\lambda H_{ij}}{\lambda^2 - H_{ij}H_{ji}} \,, \qquad m_i(\lambda) \;\deq\; 1  + \sum_{k} \frac{H_{ik}H_{ki} }{\lambda^2 - H_{ik}H_{ki} }\,.
\end{equation} 
Then $\lambda \in \sigma(B)$ if and only if  $\det (M(\lambda) - H(\lambda)) = 0$.
\end{lemma}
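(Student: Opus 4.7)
The plan is to prove the equivalence by directly constructing a correspondence between the eigenvectors of $B$ for eigenvalue $\lambda$ and the kernel vectors of $M(\lambda) - H(\lambda)$. The key computational idea is that the nonbacktracking constraint couples the entries $v_{(i,j)}$ and $v_{(j,i)}$ pairwise, allowing one to eliminate all edge-indexed variables in favour of vertex-indexed variables. The hypothesis $\lambda^2 \ne H_{ij} H_{ji}$ is exactly what makes this elimination possible.

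For the forward direction, suppose $Bv = \lambda v$ with $v \ne 0$. Writing out the eigenvalue equation at edge $e = (i,j)$ gives
$$\lambda v_{(i,j)} \;=\; \sum_{l \ne i} H_{jl} v_{(j,l)} \;=\; w_j - H_{ji} v_{(j,i)}, \qquad w_j \deq \sum_l H_{jl} v_{(j,l)}.$$
Applying the same identity with $(i,j)$ replaced by $(j,i)$ and substituting, I get a linear $2\times 2$ system for $(v_{(i,j)}, v_{(j,i)})$ whose determinant is $\lambda^2 - H_{ij}H_{ji} \ne 0$, yielding the explicit formula
$$v_{(i,j)} \;=\; \frac{\lambda w_j - H_{ji} w_i}{\lambda^2 - H_{ij} H_{ji}}.$$
Plugging this back into the definition of $w_j$ and simplifying produces exactly $(M(\lambda) - H(\lambda)) w = 0$. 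Moreover $w$ must be nonzero, because the formula above shows $w = 0 \Rightarrow v = 0$.

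For the reverse direction, given $w \ne 0$ with $(M(\lambda) - H(\lambda)) w = 0$, I define $v_{(i,j)}$ by the same explicit formula and check by direct computation that $(Bv)_{(i,j)} = \lambda v_{(i,j)}$. The calculation reduces, after expanding $(Bv)_{(i,j)} = \sum_l H_{jl} v_{(j,l)} - H_{ji} v_{(j,i)}$ and substituting, to the identity $\sum_l H_{jl}(\lambda w_l - H_{lj} w_j)/(\lambda^2 - H_{jl}H_{lj}) = w_j$, which is exactly the $j$-th component of $(M(\lambda) - H(\lambda)) w = 0$.

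The only subtle point — which I expect to be the main (minor) obstacle — is verifying that the constructed $v$ is nonzero. If $\lambda \ne 0$ and $v \equiv 0$, then $\lambda w_j = H_{ji} w_i$ for all $i,j$; the case where some $w_i$ vanishes forces $w \equiv 0$, while the case where all $w_i \ne 0$ gives $H_{ij} H_{ji} = \lambda^2$, contradicting the hypothesis. If $\lambda = 0$, the hypothesis $\lambda^2 \ne H_{ij}H_{ji}$ forces every $H_{ij}$ to be nonzero, so the formula reduces to $v_{(i,j)} = w_i / H_{ij}$ and nonvanishing is immediate. With both directions in hand, $\lambda \in \sigma(B) \iff \ker(M(\lambda) - H(\lambda)) \ne 0 \iff \det(M(\lambda) - H(\lambda)) = 0$.
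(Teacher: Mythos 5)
Your proof is correct and takes essentially the same route as the paper: pass to the auxiliary vector $w_j \deq \sum_l H_{jl} v_{(j,l)}$, solve the coupled pair of eigenvalue equations at $(i,j)$ and $(j,i)$ to obtain the explicit formula for $v$ in terms of $w$ (this is precisely where $\lambda^2 \neq H_{ij}H_{ji}$ is used), and substitute back to identify the equation $(M(\lambda)-H(\lambda))w = 0$, then reverse the construction. The one thing you add is the explicit check that the constructed $v$ is nonzero in the converse direction, including the case analysis at $\lambda = 0$ where the hypothesis forces every $H_{ij} \neq 0$; the paper's proof leaves this point implicit, so your verification closes a small but genuine gap.
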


\begin{proof}
We abbreviate $ij \equiv (i,j) \in [n]^2$. Let $\lambda \in \sigma(B)$ be an eigenvalue of $B$ with eigenvector $x \in \C^{[n]^2}$, i.e.\ $B x = \lambda x$, which reads in components
\begin{equation}\label{eq:eigB}
\lambda x_{ji} = \sum_{k \ne j} H_{ik} x_{ik}
\end{equation}
for all $i,j \in [n]$.
 We define $y \in \C^n$ by, for each $i$,  
 $$
 y_i \deq \sum_k H_{ik} x_{ik}\,. 
 $$
The eigenvalue equation $\lambda x = B x$ reads
 $$
 \lambda x_{ji} = y_i - H_{ij}  x_{ij}\,. 
 $$
Exchanging $i$ and $j$, we obtain
 $$
 \lambda x_{ij} = y_j - H_{ji} x_{ji}\,,
 $$
from which we deduce
$$
 \lambda^2 x_{ji} =  \lambda y_i - H_{ij}   \lambda x_{ij}  =   \lambda y_i - H_{ij}  y_j + H_{ij}  H_{ji} x_{ji}\,.
 $$
 Hence, because $\lambda^2 \neq H_{ij} H_{ji}$,
\begin{equation} \label{x_defy}
 x_{ji} = \frac{ \lambda y_i - H_{ij} y_j }{\lambda^2 - H_{ij} H_{ji}}\,. 
\end{equation}
We see from this last expression
that $y \ne 0$ if $x \ne 0$. We plug this last expression into \eqref{eq:eigB} and get
 $$
 \frac{\lambda^2 y_i }{ \lambda^2 - H_{ij} H_{ji}}  - \frac{ \lambda H_{ij} y_j }{\lambda^2 - H_{ij} H_{ji}} =  \sum_{k \ne j} \frac{\lambda H_{ik} y_k}{ \lambda^2 - H_{ik} H_{ki} }  - \sum_{k \ne j} \frac{ H_{ik} H_{ki} y_i }{\lambda^2 - H_{ik} H_{ki}}\,,
 $$
 i.e.
  $$
 \frac{\lambda^2 y_i }{ \lambda^2 - H_{ij} H_{ji}}
 =  \sum_{k  } \frac{\lambda H_{ik} y_k}{ \lambda^2 - H_{ik} H_{ki} }  - \sum_{k \ne j} \frac{ H_{ik} H_{ki} y_i }{\lambda^2 - H_{ik} H_{ki}}\,.
 $$
 We conclude that
  $$y_i=
 \frac{\lambda^2 y_i }{ \lambda^2 - H_{ij} H_{ji}}   - \frac{ H_{ij} H_{ji} y_i }{\lambda^2 - H_{ij} H_{ji}} =  \sum_{k  } \frac{\lambda H_{ik} y_k}{ \lambda^2 - H_{ik} H_{ki} }  - \sum_{k } \frac{ H_{ik} H_{ki} y_i }{\lambda^2 - H_{ik} H_{ki}}\,.  
 $$
 Hence,
 $$
y_i \PAR{ 1 +  \sum_{k} \frac{ H_{ik} H_{ki} }{\lambda^2 - H_{ik} H_{ki}}  } -  \sum_{k} \frac{\lambda H_{ik} y_k}{ \lambda^2 - H_{ik} H_{ki} }  = 0\,,
 $$
which proves that $0$ is an eigenvalue of $M(\lambda) - H(\lambda)$.

Conversely, if $0$ is an eigenvalue of $M(\lambda) - H(\lambda)$ with eigenvector $y$, we define $x$ through \eqref{x_defy}.
Then the above computation also implies that $x$ satisfies \eqref{eq:eigB}, i.e.\ $Bx = \lambda x$, so that $\lambda \in \sigma(B)$.
\end{proof}

\subsection{Comparison of spectra and proof of Theorem \ref{thm:From_NBmatrix_to_matrix}}

We use the notation $M \preceq N$ for Hermitian matrices $M$ and $N$ to mean that $N - M$ is a positive matrix. The key estimate behind the proof of Theorem \ref{thm:From_NBmatrix_to_matrix} is the following result. We thank Sasha Sodin for his help in simplifying its proof.

\begin{proposition}\label{prop:AIB}
 Let $H \in M_n(\C)$ be a Hermitian matrix with associated nonbacktracking matrix $B$. 
Suppose that there exists $\delta \in [0, 1]$ such that 
\begin{equation}\label{eq:leAIB}
\max_{i,j} |H_{ij}| \leq \delta \qquad \text{and} \qquad \max_{i}  \sum_j |H_{ij}|^2  \leq 1 + \delta\,.  
\end{equation}
Let \begin{equation}\label{eq:deflam0}\lam_0\deq \max\hb{1+\sqrt\del,\max(\si(B)\cap\R)}\,.\ee
Then 
$$
H\preceq \lam_0+\ff{\lam_0}+6\del.
$$
\end{proposition}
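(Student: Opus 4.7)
The strategy is to invoke Lemma \ref{le:IB} at real values of $\lambda$ just above $\lambda_0$, turn the resulting non-singularity of $M(\lambda) - H(\lambda)$ into positive definiteness via a continuity argument, and then decode that operator inequality as the desired bound on $H$.

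\emph{Step 1 (positivity of $M(\lambda) - H(\lambda)$).} For every real $\lambda > \lambda_0$, the hypothesis $\lambda^2 \ne H_{ij}H_{ji}$ of Lemma \ref{le:IB} holds: since $H$ is Hermitian, $H_{ij}H_{ji} = |H_{ij}|^2 \leq \delta^2$, while $\lambda > 1 + \sqrt\delta > \delta$. Moreover $\lambda \in \mathbb{R}$ is strictly greater than $\lambda_0 \geq \max(\sigma(B) \cap \mathbb{R})$, so $\lambda \notin \sigma(B)$; Lemma \ref{le:IB} then yields invertibility of $M(\lambda) - H(\lambda)$ on $(\lambda_0,\infty)$. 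Because $H$ is Hermitian, this matrix is itself Hermitian for real $\lambda$ and depends continuously on $\lambda$. As $\lambda \to \infty$ one has $M(\lambda) \to I$ and $H(\lambda) \to 0$, so every eigenvalue of $M(\lambda) - H(\lambda)$ tends to $1$; since none vanishes on $(\lambda_0,\infty)$, the intermediate value theorem forces every eigenvalue to be strictly positive throughout, so $M(\lambda) - H(\lambda) \succ 0$ for all $\lambda > \lambda_0$.

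\emph{Step 2 (extracting a bound on $H$).} Step 1 gives $H(\lambda) \preceq M(\lambda) \preceq (\max_i m_i(\lambda))\, I$. I would decompose $\lambda H(\lambda) = H + R(\lambda)$ with
\[
R_{ij}(\lambda) \;\deq\; \frac{|H_{ij}|^2\, H_{ij}}{\lambda^2 - |H_{ij}|^2},
\]
so that $R(\lambda)$ is Hermitian. Multiplying the previous inequality by $\lambda$ yields $H \preceq \bigl(\lambda \max_i m_i(\lambda) + \|R(\lambda)\|\bigr)\, I$. Using the hypothesis \eqref{eq:leAIB}, I would estimate termwise $m_i(\lambda) \leq 1 + (1+\delta)/(\lambda^2-\delta^2)$ and $\|R(\lambda)\| \leq \|R(\lambda)\|_\infty \leq \delta(1+\delta)/(\lambda^2-\delta^2)$ (valid because for Hermitian $R$ one has $\|R\| \leq \|R\|_\infty$). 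Summing and simplifying with the identity $(\lambda+\delta)/(\lambda^2-\delta^2) = 1/(\lambda-\delta)$ gives
\[
H \;\preceq\; \Bigl(\lambda + \frac{1+\delta}{\lambda - \delta}\Bigr)\, I \qquad \text{for all } \lambda > \lambda_0.
\]

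\emph{Step 3 (conclusion).} Letting $\lambda \downarrow \lambda_0$ and using $\delta \leq 1$ together with $\lambda_0 \geq 1 + \sqrt\delta$, which gives $\lambda_0 - \delta \geq 1$ (since $\sqrt\delta \geq \delta$ for $\delta \in [0,1]$), the elementary identity
\[
\frac{1+\delta}{\lambda_0 - \delta} - \frac{1}{\lambda_0} \;=\; \frac{\delta(\lambda_0 + 1)}{\lambda_0(\lambda_0 - \delta)} \;\leq\; \delta\,\Bigl(1 + \frac{1}{\lambda_0}\Bigr) \;\leq\; 2\delta
\]
bounds the excess over $\lambda_0 + 1/\lambda_0$ comfortably within $6\delta$. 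The main point requiring care is the continuity step, which crucially uses both the Hermiticity of $M(\lambda) - H(\lambda)$ (so eigenvalues are real and continuous) and its invertibility on the whole half-line $(\lambda_0,\infty)$ (so that no eigenvalue can vanish on the way down from $+\infty$). Everything else is honest but routine constant-tracking.
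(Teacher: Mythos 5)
Your proposal is correct and follows essentially the same route as the paper: it uses Lemma \ref{le:IB} to get invertibility of $M(\lambda)-H(\lambda)$ for real $\lambda>\lambda_0$, upgrades this to positive definiteness by continuity from the $\lambda\to\infty$ limit, and then controls $M(\lambda)$ and the Hermitian correction $\lambda H(\lambda)-H$ by the same entrywise/row-sum estimates. Your bookkeeping (bounding $M(\lambda)\preceq\max_i m_i(\lambda)\,I$ and adding $\|R(\lambda)\|$, then letting $\lambda\downarrow\lambda_0$) even yields the slightly better constant $2\delta$ in place of $6\delta$, but this is a cosmetic variation of the paper's argument.
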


\begin{proof}
Let $H(\lambda)$ and $M(\lambda)$ be the matrices defined in Lemma \ref{le:IB}. First note that   $0\preceq M(\lam_0) - H(\lam_0)$. Indeed,  as $\lambda \to+\infty$,
\begin{equation*}
M(\lambda) - H(\lambda)= I+O(\lambda^{-1})\,,
\end{equation*}
so that for $\lambda$ large enough $M(\lambda) - H(\lambda)$ is positive definite. But by Lemma \ref{le:IB}, the real zeros of $\lambda \mapsto \det(M(\lambda) - H(\lambda))$ are the real eigenvalues of $B$, which implies that  for $\lambda>\lam_0$, $\det(M(\lambda) - H(\lambda)) >0$. By continuity, we conclude that $M(\lambda) - H(\lambda)$ is positive definite for any $\lambda>\lam_0$.

Next, a direct computation  shows that for any $\lambda\ge  1 +  \sqrt{\delta}$, we have
\begin{equation*}
|\lambda H_{ij}(\lambda)-H_{ij}|=|H_{ij}|^3\ff{\lambda^{2}-|H_{ij}|^2}\le\del |H_{ij}|^2\f{1}{\lambda^2-\del^2}\le \del |H_{ij}|^2\f{1}{1+\del+2\sqrt \del-\del^2}\le \del |H_{ij}|^2\,.
\end{equation*}
We deduce (by the Schur test or the Gershgorin circle theorem) that $\|\lambda H(\lambda)-H\|\le \del\max_{i}\sum_j|H_{ij}|^2\le 2\del$. Another computation shows that for any $\lambda\ge  1 +  \sqrt{\delta}$,   we have \begin{multline*} \lambda m_{i}(\lambda)- \pbb{\lambda+\ff \lambda}= \frac{1}{\lambda}\pbb{\sum_{k} \frac{|H_{ik}|^2 }{1 - \lambda^{-2} |H_{ik}|^2}-1}\;\le\; \ff \lambda\pbb{  \frac{1+\del}{1 - \lambda^{-2} \del^2}-1}\\ =\del\f{\lambda(1+\lambda^{-2}\del)}{\lambda^2-\del^2}\;\le\;\del\f{2\lambda}{\lambda^2-\del^2}\;\le\; 4\del\,.\end{multline*}
(In the last step, we considered separately the cases $\lambda\ge 2$ and  $\lambda<2$, for which we use $\lambda^2-\del^2\ge 1$.) From both previous computations, we deduce that    for any $\lambda\ge  1 +  \sqrt{\delta}$,   $$\lambda (M(\lambda)-H(\lambda)) \preceq \lambda+\ff \lambda-H+6\del.$$

To sum up, we have $$0\preceq \lambda_0 (M(\lam_0) - H(\lam_0)) \preceq \lam_0+\ff{\lam_0}-H+6\del\,,$$ from which the claim follows.
\end{proof}

We may now conclude the proof of Theorem \ref{thm:From_NBmatrix_to_matrix}.

\bpr[Proof of Theorem \ref{thm:From_NBmatrix_to_matrix}]
Set $\del \deq \norm{H}_{1\to\infty}$.
Note that $\delta \leq 1$ because $\norm{H}_{2\to\infty} \leq  1$. By Proposition \ref{prop:AIB}, for $\lam_1\deq \max\{1+\sqrt\del,\rho(B)\}$, we have $$
\|H\|\le f(\lam_1)+6\del\,.$$
By considering the cases $\lambda_1 = \rho(B)$ and $\lambda_1 = 1 + \sqrt{\delta}$ separately, and using that $2 \leq f(1+x)\le 2+x^2$, we easily find that if $\norm{H}_{2\to\infty} \leq  1$ then
$$
\norm{H} \leq f(\rho(B))  +7 \norm{H}_{1\to\infty}\,.
$$
The claim for arbitrary $H$ now follows by homogeneity. 
\epr

\section{Estimate of $\rho(B)$ and proof of Theorems \ref{thm:rhoB} and \ref{thm:norm_H}}

The main estimate of this section is the following result.

\begin{proposition} \label{prop:BB_bound}
Suppose that $H$ satisfies Assumption \ref{ass:H} and that $B$ is the nonbacktracking matrix associated with $H$.
There exist univeral constants $c_0 , C_0 > 0$ such that the following holds.
If $\ell \geq 1$, $q \geq 1$, and $\delta \in (0,1/3)$ satisfy
\begin{equation} \label{ell_assumptions}
\ell \leq c_0 \min \hbb{\delta q \log n, \frac{n^{1/3 - \delta}}{q ^2 \kappa^{1/3}}}
\end{equation}
then
\begin{equation*}
\E \tr B^\ell B^{*\ell} \leq C_0 n^2 \ell^{4} q^2\,.
\end{equation*}
\end{proposition}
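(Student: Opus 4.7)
The plan is to apply the method of moments in the Füredi-Komlós spirit, adapted to the nonbacktracking setting. Expanding via \eqref{eq:defB}, I would write
$$\E \tr B^\ell B^{*\ell} = \sum_{\bs W} \E \prod_{a=1}^\ell H_{(e_a)_1 (e_a)_2} \cdot \overline{H_{(f_a)_1 (f_a)_2}}\,,$$
where $\bs W = (e_0, \ldots, e_\ell; f_0, \ldots, f_\ell)$ with $f_0 = e_0$ and $f_\ell = e_\ell$ runs over pairs of nonbacktracking walks of length $\ell$ on directed edges sharing their first and last edges, and $(e_a)_1, (e_a)_2$ denote the two components of $e_a \in [n]^2$. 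Each double walk picks out $2\ell$ entries of $H$; since these are independent and centred, the expectation vanishes unless every undirected edge $\{i,j\}$ appearing in $\bs W$ is used at least twice in total.

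I would then partition the sum by the \emph{shape} of $\bs W$, defined as its isomorphism class under vertex relabelling in the order of first appearance. A shape $S$ carries parameters $V(S)$ (distinct vertices), $E(S)$ (distinct undirected edges), multiplicities $m_e \geq 2$, and excess $s(S) \deq \sum_e(m_e - 2) = 2\ell - 2E(S)$. Using Assumption \ref{ass:H} and $\E \abs{H_e}^{m_e} \leq \E\abs{H_e}^2 \cdot (1/q)^{m_e - 2}$, any walk of shape $S$ satisfies
$$\absbb{\E \prod_{a=1}^\ell H_{(e_a)_1 (e_a)_2} \cdot \overline{H_{(f_a)_1 (f_a)_2}}} \leq \pbb{\frac{\kappa}{n}}^{E(S)} \pbb{\frac{1}{q}}^{s(S)}\,,$$
while the number of labelled walks realising shape $S$ is at most $n^{V(S)}$. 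Introducing the cyclomatic number $k(S) \deq E(S) - V(S) + 1 \geq 0$, the total contribution of all walks of shape $S$ is bounded by a product $n \cdot (\kappa/n)^{k(S)} q^{-s(S)}$ up to a factor $\kappa^{V(S)-1}$ that can be reabsorbed.

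The heart of the argument is a combinatorial estimate on the number of shapes with given $(k,s)$. For \emph{tree-like} shapes ($k = s = 0$, where the backward walk retraces the forward walk along a nonbacktracking path and every edge is used exactly twice), iterated application of $\max_i \sum_j \E\abs{H_{ij}}^2 \leq 1$ along the path together with summation over the $n^2$ starting edges $e_0$ gives a total contribution of at most $n^2$. For non-tree-like shapes, I would follow the reduction strategy initiated by Friedman \cite{MR1137767}: a nonbacktracking double walk decomposes into a tree skeleton plus a controlled number of ``tangle'' encounters, each contributing either a new cycle (to $k$) or an extra edge use (to $s$). The nonbacktracking constraint is essential in that outside tangle locations the walk is uniquely determined by its skeleton, so each tangle encounter is encoded by at most $\ell^{O(1)}$ choices of position along the walk. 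Combining this counting with the moment bound, each added cycle contributes a factor $\kappa \ell^{O(1)}/n$ and each added edge use a factor $\ell^{O(1)}/q$. The hypotheses \eqref{ell_assumptions} are precisely calibrated so that these factors stay below $1$: $\ell \leq c_0 \delta q \log n$ keeps $\ell^{O(1)}/q$ small uniformly in $s$, while $\ell \leq c_0 n^{1/6 - \delta}/(q \kappa^{1/6})$ keeps $\kappa \ell^{O(1)}/n$ small uniformly in $k$. Summing a geometric-like series over $(k,s)$ then produces the claimed bound $C_0 n^2 \ell^8 q^2$.

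The main obstacle is the combinatorial bookkeeping of the skeleton-plus-tangle reduction and the derivation of the \emph{universal} polynomial factor in $\ell$ and $q$. The key technical input is that nonbacktracking walks, unlike unrestricted walks, do not proliferate exponentially in $\ell$; a careful tangle-encounter analysis must show that each revisit of an already-explored vertex or edge contributes only $\ell^{O(1)}$ possibilities to the shape count. Tracking the universal exponents through the entire reduction (in particular the specific exponent $8$ of $\ell$ and the factor $q^2$) is what the delicate counting in the proof is designed to accomplish.
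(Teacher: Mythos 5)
Your overall strategy---expanding $\E \tr B^\ell B^{*\ell}$ over pairs of nonbacktracking walks with matching first and last edges, using mean-zero independence to force every unordered edge to be used at least twice, grouping by isomorphism class, and controlling the classes through a Friedman-type skeleton reduction---is the same as the paper's. But there is a genuine gap in your per-shape expectation bound. You estimate every edge by the entrywise moment bound $\max_{i,j}\E\abs{H_{ij}}^{m}\le \kappa n^{-1}q^{-(m-2)}$, get $(\kappa/n)^{E(S)}q^{-s(S)}$ per walk and $n^{V(S)}$ walks per shape, and then assert that the leftover factor $\kappa^{V(S)-1}$ ``can be reabsorbed''. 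It cannot: Assumption \ref{ass:H} allows $\kappa$ up to roughly $n^{12/13}$, and even a constant $\kappa>1$ gives $\kappa^{V(S)-1}=e^{\Theta(\ell)}=n^{\Theta(q)}$ since $V(S)$ can be of order $\ell\asymp q\log n$; this overwhelms the target $n^2\ell^8q^2$. The fix is exactly the device you already invoke for the tree-like case, but it must be applied to every shape: order the vertices along a spanning tree, sum them out leaf-by-leaf using the row-sum bound $\max_i\sum_j\E\abs{H_{ij}}^{m}\le q^{-(m-2)}$ for the $V(S)-1$ tree edges, and use the entrywise bound $\kappa/(nq^{m-2})$ only for the $g(S)=E(S)-V(S)+1$ excess edges. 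This yields $n^{1-g}\kappa^{g}q^{2E(S)-2\ell}$ per class (the paper's Lemma \ref{lem:sum_gamma_Gamma}), with $\kappa$ raised only to the genus, which is what makes the hypothesis $\ell\lesssim n^{1/6-\delta}/(q\kappa^{1/6})$ usable at all.

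On the combinatorial side your picture is right (the nonbacktracking condition is what lets the walk descend to a path on the multigraph obtained by contracting degree-two vertices, keeping the two endpoints of the half-walks uncontracted), but the convergence heuristic is off: the relevant per-step cost in the reduced multigraph is not an $\ell^{O(1)}/q$ factor that ``stays below $1$''. A step of the reduced path costs up to $\deg_{\max}\le 2g+2$ choices (this requires a separate lemma bounding the maximal degree of the reduced multigraph by its genus), so the factor is of order $g/q$ and typically exceeds $1$ in the regime $\ell\asymp q\log n\gg q$. The sum over shapes converges not term-by-term but through the trade-off between $(Cg/q)^{2\ell}$ and $(\kappa\ell^6q^6/n)^{g}$, and checking that the maximum over the genus $g$ of the resulting exponent is harmless is exactly where the two constraints in \eqref{ell_assumptions} (equivalently $\ell\le (q/8)\log\pb{n/(C\kappa\ell^6q^6)}$) are used. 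You flag the bookkeeping as the remaining obstacle, which is fair, but as written the proposal has neither the correct per-shape moment estimate nor the counting inputs (genus bounds on $\abs{E(U)}$ and $\abs{V(U)}$ from the minimal-degree-three property, the maximal-degree lemma, and the stars-and-bars count of the edge weights) needed to close the argument.
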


\subsection{Proof of Proposition \ref{prop:BB_bound}} \label{sec:pfpropBB}
Throughout the following we fix $\ell$ and mostly omit it from our notation.
For any $e\in [n]^2$ and $f\in [n]^2$ we have 
\begin{equation*} (B^{\ell})_{ef}=\sum_{a_1,\dots, a_{\ell - 1} \in [n]^2} B_{e a_1}B_{a_1a_2}\cd B_{a_{\ell-1} f}\,.
\end{equation*}
By Definition \ref{def:B}, we therefore find
\begin{equation*} (B^{\ell})_{ef}=\sum_\xi H_{\xi_0\xi_1}H_{\xi_1\xi_2}\cd H_{\xi_{\ell-1}\xi_{\ell}}\,,
\end{equation*}
where the sum runs over  $\xi = (\xi_{-1},\xi_0, \ld, \xi_{\ell}) \in [n]^{\ell + 2}$ satisfying $(\xi_{-1}, \xi_0)=
e$, 
$(\xi_{\ell-1},\xi_{\ell})=
f$
and $\xi_{i-1} \ne  \xi_{i+1}$ for $i=0, \ld, \ell-1$.
Hence,
\begin{align*} \Tr B^\ell{B^\ell}^*&=\sum_{e,f\in [n]^2}|(B^\ell)_{ef}|^2\\
&=
\sum_{\xi^1, \xi^2} H_{\xi^1_0\xi^1_1}H_{\xi^1_1\xi^1_2}\cd H_{\xi^1_{\ell-1}\xi^1_{\ell}}\ovl{H}_{\xi^2_0\xi^2_1}\ovl{H}_{\xi^2_1\xi^2_2}\cd \ovl{H}_{\xi^2_{\ell-1}\xi^2_{\ell}}\,,
\end{align*}
where the sum runs over $\xi^1=(\xi^1_{-1}, \ld, \xi^1_{\ell})$, $\xi^2=(\xi^2_{-1}, \ld, \xi^2_{\ell}) \in [n]^{\ell + 2}$ such that $(\xi^1_{-1}, \xi^1_0)=(\xi^2_{-1},\xi^2_0)$, 
$(\xi^1_{\ell-1},\xi^1_{\ell})=(\xi^2_{\ell-1},\xi^2_{\ell})
$
and $\xi^1_{i-1} \ne  \xi^1_{i+1}$ and $\xi^2_{i-1} \ne  \xi^2_{i+1}$ for $i=0, \ld, \ell-1$.  Note that $\xi^1_{-1}$ does not appear as an index of $H$.
Fixing all indices except $\xi^1_{-1} = \xi^2_{-1}$, we find that the sum over $\xi^1_{-1} = \xi^2_{-1}$ is bounded by $n$. The remaining sum over $\xi^1_0, \xi^2_0, \dots, \xi^1_\ell, \xi^2_\ell$ is nonnegative. This yields the estimate
\begin{align*}
\Tr B^\ell B^{\ell *} &\le n
\sum_{\xi^1, \xi^2} H_{\xi^1_0\xi^1_1}H_{\xi^1_1\xi^1_2}\cd H_{\xi^1_{\ell-1}\xi^1_{\ell}}\ovl{H}_{\xi^2_0\xi^2_1}\ovl{H}_{\xi^2_1\xi^2_2}\cd \ovl{H}_{\xi^2_{\ell-1}\xi^2_{\ell}}
\\
&= n
\sum_{\xi^1, \xi^2} H_{\xi^1_0\xi^1_1}H_{\xi^1_1\xi^1_2}\cd H_{\xi^1_{\ell-1}\xi^1_{\ell}}   {H}_{\xi^2_{\ell}\xi^2_{\ell-1}} {H}_{\xi^2_{\ell-1}\xi^2_{\ell-2}}\cd {H}_{\xi^2_1\xi^2_0}\,,
\end{align*}
where the sum runs over $\xi^1=(\xi^1_0, \ld, \xi^1_{\ell}), \xi^2=(\xi^2_0, \ld, \xi^2_{\ell}) \in [n]^{\ell + 1}$ such that  $( \xi^1_0,\xi^1_{\ell-1},\xi^1_{\ell})=( \xi^2_0,\xi^2_{\ell-1},\xi^2_{\ell})
$
and $\xi^1_{i-1} \ne  \xi^1_{i+1}$ and $\xi^2_{i-1} \ne  \xi^2_{i+1}$ for $i=1, \ld, \ell-1$. In the second step we used that $H$ is Hermitian. Renaming the summation variables, we have
\begin{equation*}
\Tr B^\ell B^{\ell *} \leq n
\sum_{\xi \in \wt{\cal C}} H_{\xi_0\xi_1}H_{\xi_1\xi_2}\cd {H}_{\xi_{2\ell-1}\xi_{2\ell}}\,,
\end{equation*}
where
\begin{equation*}
\wt {\cal C} \deq \hb{\xi=(\xi_0, \ld, \xi_{2\ell})\in [n]^{2\ell+1} \col \xi_0=\xi_{2\ell}, \xi_{\ell-1}= \xi_{\ell+1},  \xi_{i-1}\ne \xi_{i+1} \text{ for } i\in[2\ell-1]\bck\{\ell\}}\,.
\end{equation*}
Because the entries of $H$ are independent and have mean zero, we find
\begin{equation} \label{trBB_paths}
\E \Tr B^\ell B^{\ell *} \leq n
\E \sum_{\xi \in \cal C} H_{\xi_0\xi_1}H_{\xi_1\xi_2}\cd {H}_{\xi_{2\ell-1}\xi_{2\ell}}\,,
\end{equation}
where we defined $\cal C$ as the set of $\xi \in \wt {\cal C}$ such that $\absb{\hb{i \in [2\ell] \col \{\xi_{i - 1}, \xi_i\} = \{a,b\}}} \neq 1$ for all $a,b \in [n]$. In words, for $\xi \in \cal C$ every unordered edge cannot be crossed only once by $\xi$.

In the following we shall need several basic graph-theoretic notions. Since they involve paths on multigraphs, it is important to introduce them with some care. By definition, a (vertex-labelled undirected) \emph{multigraph} $G = (V(G), E(G), \phi)$ consists of two finite sets, the set of \emph{vertices} $V(G)$ and the set of \emph{edges} $E(G)$, and a map $\phi$ from $E(G)$ to the unordered sets of one or two elements of $V(G)$. For $e \in E(G)$, the set $\phi(e)$ is the set of vertices \emph{incident} to $e$. The edge $e \in E(G)$ is a \emph{loop} if $\abs{\phi(e)} = 1$. The \emph{degree} $\deg(v)$ of a vertex $v \in V(G)$ is the number of edges to which it is incident, whereby a loop incident to $v$ counts twice. The \emph{genus} of $G$ is $g(G) \deq \abs{E(G)} - \abs{V(G)} + 1$. A \emph{path of length $l \geq 1$ in $G$} is a word $w = w_0 w_{01} w_1 w_{12} \dots w_{l-1 l} w_l$ such that $w_0, w_1, \dots, w_l \in V(G)$, $w_{01}, w_{12}, \dots, w_{l-1 l} \in E(G)$, and $\phi(w_{i - 1 i}) = \{w_{i - 1}, w_i\}$ for $i = 1, \dots, l$. We denote the length $l$ of $w$ by $\abs{w} \deq l$.
The path $w$ is \emph{closed} if $w_0 = w_l$. For $e \in E(G)$ we define the \emph{number of crossings of $e$ by $w$} to be $m_e(w) \deq \sum_{i = 1}^l \ind{w_{i-1 i} = e}$. In particular, we have $\sum_{e \in E(G)} m_e(w) = \abs{w}$.

Moreover, the multigraph $G$ is called simply a \emph{graph} if $\phi$ is injective, i.e.\ there are no multiple edges. (Note that in our convention a graph may have loops.) For a graph $G$ we may and shall identify $E(G)$ with a set of unordered pairs of $V(G)$, simply identifying $e$ and $\phi(e)$. Similarly,  we identify a path $w$ with the reduced word $w_0 w_1 \dots w_l$ only containing the vertices, since we must have $w_{i - 1 i} = \{w_{i - 1}, w_i\}$ for $i = 1, \dots, l$.

\begin{Def} \label{def_Gxi}
For $\xi \in \cal C$ we define the graph $G_{\xi}$ as
\begin{equation*}
V(G_\xi) \deq \h{\xi_i \col i = 0, \dots, 2 \ell}\,, \qquad E(G_\xi) \deq \hb{\{\xi_{i - 1}, \xi_i\} \col i = 1, \dots, 2 \ell}\,.
\end{equation*}
Thus, $\xi \equiv \xi_0 \xi_1 \cdots \xi_{2 \ell}$ is a closed path in the graph $G_\xi$.
\end{Def}
Next, we introduce an equivalence relation on $\cal C$ by saying that two paths $\xi, \tilde \xi \in \cal C$ are equivalent, denoted $\xi \sim \tilde \xi$, if and only if there exists a permutation $\tau$ of $[n]$ such that $\tau(\xi_i) = \tilde \xi_i$ for all $i = 0, \dots, 2 \ell$.
Clearly, the numbers $\abs{E(G_\xi)}$ and $\abs{V(G_\xi)}$, and hence also $g(G_\xi)$, only depend on the equivalence class of $\xi$. We denote by $[\xi] \subset \cal C$ the equivalence class of a path $\xi \in \cal C$ in the set $\cal C$.

\beg{lem} \label{lem:sum_gamma_Gamma}
For any $\bar \xi \in \cal C$ we have
$$\E\sum_{\xi \in [\bar \xi]}  H_{\xi_0\xi_1}H_{\xi_1\xi_2}\cd    {H}_{\xi_{2\ell-1}\xi_{2\ell}} \le n^{1-g(G_{\bar \xi})} \kappa^{g(G_{\bar \xi})} q^{2\abs{E(G_{\bar \xi})} - 2\ell}\,.$$
\en{lem} 

\bpr
Abbreviate $g \deq g(G_{\bar \xi})$, $a \deq \abs{E(G_{\bar \xi})}$, and $s \deq \abs{V(G_{\bar \xi})}$, so that $g=a-s+1$. Since the claim only depends on $\bar \xi$ through its equivalence class $[\bar \xi]$, we may replace $\bar \xi$ with any equivalent path of $[\bar \xi]$ obtained by relabelling the vertices. Thus, we may suppose that $V(G_{\bar \xi}) = [s]$ and that there is a spanning tree $T$ of $G_{\bar \xi}$ such that for all $t \in 2, \dots, s$ the subgraph $T \vert_{[t]}$ is a spanning tree of $[t]$. (This amounts to the requirement that the vertices $[s]$ are first explored in increasing order by the path $\bar \xi$.) Moreover, we enumerate the edges of $G_{\bar \xi}$ as $e_1, \dots, e_a$, where for $t = 1, \dots, s-1$ the edge $e_t$ is the unique edge of $T\vert_{[t+1]}$ that is not an edge of $T \vert_{[t]}$, and the edges $e_s, \dots, e_a$ are the remaining $g$ edges in some arbitrary order.

For $t=1, \ld, a$ we abbreviate $m_t \deq m_{e_t}(\bar \xi)$ for the number of crossings of $e_t$ by $\bar \xi$.
Moreover, we denote by $\cal I_{s,n}$ the set of injective maps from $[s]$ to $[n]$. Then we have
\begin{align*} \E\sum_{\xi\in [\bar \xi]}  H_{\xi_0\xi_1}H_{\xi_1\xi_2}\cd    {H}_{\xi_{2\ell-1}\xi_{2\ell}}& =\sum_{\tau \in \cal I_{s,n}} \E H_{\tau(\bar \xi_0)\tau(\bar \xi_1)}H_{\tau(\bar \xi_1)\tau(\bar \xi_2)}\cd    {H}_{\tau(\bar \xi_{2\ell-1})\tau(\bar \xi_{2\ell})} \\
   &\le  \sum_{\tau \in \cal I_{s,n}} \prod_{t=1}^a\E|H_{\tau(e_t)}^{m_t}|, 
   \end{align*}
   where we used the independence of the entries and the convention $\tau(\{x,y\}) \deq \{\tau(x),\tau(y)\}$. 
We use the estimate 
   \begin{equation}\label{Hyp0MomentsH}\max_i\sum_j\E |H_{ij}|^{k} \le \ff{q^{k-2}} \qquad (k = 2,3,\dots) \end{equation}
   for the edges $e_1, \ld, e_{s-1}$ and the estimate
   \begin{equation}\label{HypMomentsH} \max_{i,j} \E |H_{ij}|^{k} \leq \f{\kappa}{nq^{k-2}} \qquad (k = 2,3,\dots)
\end{equation}
for the edges $e_s, \ld, e_{a}$; both of these estimates follow immediately from \eqref{H_moment_ass}. Thus we find
  \begin{align*}
  \E\sum_{\xi\in [\bar \xi]}  H_{\xi_0\xi_1}H_{\xi_1\xi_2}\cd    {H}_{\xi_{2\ell-1}\xi_{2\ell}}
  &\le \prod_{t=s}^{a} \f{\kappa}{nq^{m_t-2}}
   \sum_{\tau \in \cal I_{s,n}} 
   \prod_{t=1}^{s-1}\E|H_{\tau(e_t)}^{m_t}|
   \\
  &\le  \prod_{t=s}^{a} \f{\kappa}{nq^{m_t-2}}
  \sum_{\tau \in \cal I_{s-1,n}} 
  \prod_{t=1}^{s-2}\E|H_{\tau(e_t)}^{m_t}|\ti \ff{q^{m_{s-1}-2}}
   \\
    &\le  \prod_{t=s}^{a} \f{\kappa}{nq^{m_t-2}}
    \sum_{\tau \in \cal I_{s-2,n}} 
    \prod_{t=1}^{s-3}\E|H_{\tau(e_t)}^{m_t}|\ti  \ff{q^{m_{s-2}-2}}\ff{q^{m_{s-1}-2}} \\
    &\le \cd
    \\
    &\le  \prod_{t=s}^{a} \f{\kappa}{nq^{m_t-2}} \sum_{\tau \in \cal I_{1,n}}\ff{q^{m_{1}-2}}\cd \ff{q^{m_{s-2}-2}}\ff{q^{m_{s-1}-2}} \\
    &= nq^{2a-\sum_{t=1}^am_t} (\kappa/n)^{a-s+1}\,.
   \end{align*}
We conclude the proof noting that $g=a-s+1$ and $\sum_{t=1}^am_t=2\ell$.
\epr

\begin{Def} \label{def:normal}
We say that a pair $(G,w)$ formed by a multigraph $G$ and a path $w = w_0 w_{01} w_1 w_{12} \dots w_{l-1 l} w_l$ is \emph{normal} if
\begin{enumerate}
\item
$V(G) = \{w_0, \dots, w_l\} = [s]$ where $s \deq \abs{V(G)}$;
\item
the vertices of $V(G)$ are visited in increasing order by $w$, i.e.\ if $w_i \notin \{w_0, \dots, w_{i - 1}\}$ then $w_i > w_1, \dots, w_{i - 1}$.
\end{enumerate}
\end{Def} 
Clearly, each equivalence class of $\sim$ in $\cal C$ has a unique representative $\xi$ such that $(G_\xi,\xi)$ is normal. We denote $\cal C_0 \deq \h{\xi \in \cal C \col (G_\xi,\xi) \text{ normal}}$. Thus, from \eqref{trBB_paths} and Lemma \ref{lem:sum_gamma_Gamma} we deduce that
\begin{equation} \label{BB_est1}
\E\Tr B^\ell{B^{\ell *}} \le n^2 \sum_{\xi \in \cal C_0} n^{-g(G_\xi)}\kappa^{g(G_\xi)} q^{2\abs{E(G_\xi)} - 2\ell}\,.
\end{equation}

Next, we introduce a parametrization of $\cal C_0$ obtained by deleting vertices of the graph $G_\xi$ that have degree two. In this process the two exceptional vertices $\{\xi_0,\xi_\ell\}$ are not collapsed. This process will result in a multigraph, denoted by $U(\xi)$ below. A similar construction appears in \cite{MR1137767}. We refer to Figure \ref{fig:G_U} for an illustration of the following construction. 

\begin{figure}[!ht]
\begin{center}
{\scriptsize 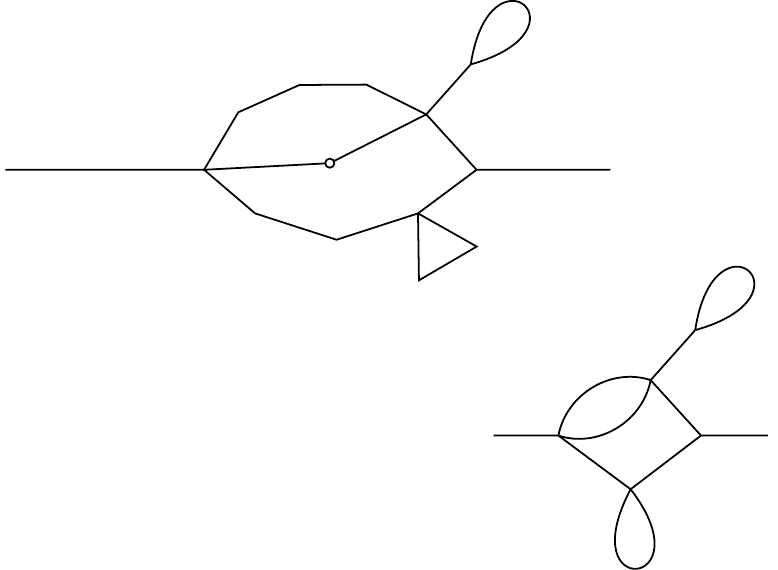}
\end{center}
\caption{
In the top diagram we draw the graph $G_\xi$ associated with the path
$\xi =$ 1,2,3,4,5,6,7,8,9,9,8,10,4,5,6,7,8,9,9,8,11,12,13,12,11,14,15,16,14,17,18,4,10,8,11,14,16,15,14,17,18,4,3,2,1. Here $\ell = 22$. Note that $G_\xi$ has a loop but no multiple edges. The number of crossings of each edge by $\xi$ is $2$, except for $\{8,9\}$ for which it is $4$. Note also that $(G_\xi,\xi)$ is normal.
In the bottom diagram we draw the multigraph $U$ associated with $\xi$. We label the vertices of $U$ by $1, \dots, 7$ and the edges of $U$ by $a, \dots, j$. The path in $U$ associated with $\xi$ is $\zeta = 1a2b3c4d4c3e2b3c4d4c3f5g6g5h7i7j2e3f5h7i7j2a1$. The number of crossings of each edge by $\zeta$ is $2$, except for $c$ for which it is $4$. Note that $\zeta$ is normal in $U$. Here $\gamma = 6$ and the weights $k$ of the edges of $U$ are given by $k_a = 3$, $k_b = 4$, $k_c = 1$, $k_d = 1$, $k_f = 1$, $k_g = 2$, $k_h = 1$, $k_i = 3$, $k_j = 3$.
\label{fig:G_U}}
\end{figure}

\begin{Def}
Let $G$ be a graph and $\cal V \subset V(G)$. Define the set
\begin{equation*}
\cal I_{\cal V}(G) \deq \hb{v \in V(G) \setminus \cal V \col \deg(v) = 2}\,.
\end{equation*}
Define the set $\Sigma_{\cal V}(G)$ to be the set of paths $w = w_0 \cdots w_l$ in $G$ such that $w_1, \dots, w_{l - 1}$ are pairwise distinct and belong to $\cal I_{\cal V}(G)$ and $w_0,w_l \notin \cal I_{\cal V}(G)$. We introduce an equivalence relation on $\Sigma_{\cal V}(G)$ by saying that $w_0 \cdots w_l$ and $w_l \cdots w_0$ are equivalent, and denote by $\Sigma'_{\cal V}(G) \deq \{[w] \col w \in \Sigma_{\cal V}(G) \}$ the set of equivalence classes.
\end{Def}

The next definition constructs a multigraph $\hat G_\xi$ from $G_\xi$, obtained by replacing every $[w] \in \Sigma'_{\{ \xi_0, \xi_\ell\} }(G)$ with an edge of $E(\hat G_\xi)$. 

\begin{Def} \label{def:U}
Let $\xi \in \cal C_0$.
Set $V(\hat G_\xi) \deq V(G_\xi) \setminus \cal I_{\{\xi_0, \xi_\ell\}}(G_\xi)$ and $E(\hat G_\xi) \deq \Sigma'_{\{\xi_0, \xi_\ell\}}(G)$ with $\phi([w]) \deq \{w_0, w_l\}$ for each $[w] \in \Sigma'_{\{\xi_0, \xi_\ell\}}(G)$.
We also assign to each edge $[w] \in E(\hat G_\xi)$ the  \emph{weight}  $\hat k_w$ to be the length of the path $w$. \end{Def}

Let now $\xi \in \cal C_0$, which is a closed path $\xi_0 \xi_1 \cdots \xi_{2 \ell}$ of length $2\ell$ in the graph $G_\xi$. Because of the nonbacktracking condition in the definition of $\cal C $ at all vertices of $G_\xi$ except $\xi_0$ and $\xi_\ell$, we find that every pair $\xi_{i-1} \xi_i$ must be contained in a word $w \in \Sigma_{\{\xi_0, \xi_\ell\}}(G_\xi)$. By writing $\xi$ as a concatenation of words from $\Sigma_{\{\xi_0, \xi_\ell\}}(G_\xi)$, we therefore conclude that the closed path $\xi = \xi_0 \xi_1 \cdots \xi_{2 \ell}$ in the graph $G_\xi$ gives rise to a closed path $\hat \xi = \hat \xi_0 \hat \xi_{01} \hat \xi_1 \hat \xi_{12} \cdots \hat \xi_{r - 1 r } \hat \xi_r$ on the multigraph $\hat G_\xi$. We stress the fundamental role of the nonbacktracking condition in the definition of $\cal C$ in the construction of $\hat \xi$; without it such a construction fails.

Summarizing, for any given $\xi \in \cal C_0$ we have constructed a triple $(\hat G_\xi, \hat \xi, \hat k)$, where $\hat G_\xi$ is a multigraph, $\hat \xi$ is a closed path in $\hat G_\xi$, and $\hat k = (\hat k_e)_{e \in E(\hat G_\xi)}$ is the family of weights of the edges of $\hat G_\xi$.

Note that $\hat \xi$ and $\hat G_\xi$ are in general not normal in the sense of Defintion \ref{def:normal}. We remedy this by setting $\tau$ to be the unique increasing bijection from $V(\hat G_\xi)$ to $\{1, \dots, \abs{V(\hat G_\xi)}\}$. Denote by $(U,\zeta,k) \equiv (U(\xi), \zeta(\xi), k(\xi))$ the triple obtained from the triple $(\hat G_\xi, \hat \xi, \hat k)$ by relabelling the vertices using $\tau$.
By definition, $\xi_0 = \tau(\xi_0) = 1$. Moreover, we set $\gamma \equiv \gamma(\xi) \deq \tau(\xi_\ell)$.

We refer to Figure \ref{fig:G_U} for an illustration of the construction of $(U,\zeta,k)$. We now collect several basic properties of the mapping $\xi \mapsto (U,\zeta,k)$. 

\begin{lemma}
The mapping $\xi \mapsto (U,\zeta,k)$ satisfies the following properties.
\begin{enumerate}
\item
The mapping $\xi \mapsto (U,\zeta,k)$ is injective on $\cal C_0$.
\item
$g(U) = g(G_\xi)$.
\item
$\zeta$ is a closed path in the multigraph $U$. It is normal in $U$ in the sense of Definition \ref{def:normal}. In particular, $\zeta_0 = \zeta_{\abs{\zeta}} = 1$.
\item
Every vertex of $V(U) \setminus \{1,\gamma\}$ has degree at least three. The vertices $1$ and $\gamma$ have degree at least one.
\item
$\abs{E(G_\xi)} = \sum_{e \in E(U)} k_e$.
\item
$m_e(\zeta) \geq 2$ for all $e \in E(U)$ and $2 \ell = \sum_{e \in E(U)} m_e(\zeta) k_e$.
\end{enumerate}
\end{lemma}

\begin{proof}
All of these properties follow immediately from the construction of $(U,\zeta,k)$. For (i), we emphasize that the requirement $\xi \in \cal C_0$ is crucial, for a relabelling of the vertices of $G_\xi$ will not change the resulting triple $(U,\zeta,k)$.
\end{proof}
\label{U_prop}

Having constructed the triple $(U,\zeta,k)$, we may now use it to estimate the right-hand side of \eqref{BB_est1}. By property (i) above, it suffices to sum over $(U,\zeta,k)$ instead of $\xi$. We find

\begin{align*} \E  \Tr B^\ell B^{\ell *}
  &\le n^2
 \sum_{(U,\zeta,k)} n^{-g(U)}q^{-\sum_{e\in E(U)}m_e(\zeta)k_e +2\sum_{e\in E(U)}k_e} \kappa^{g(U)}\\
 &=n^2
 \sum_{(U,\zeta,k)} n^{-g(U)}q^{-\sum_{  e\in E(U)}k_e(m_e(\zeta)-2)} \kappa^{g(U)}\,,
\end{align*}
where the sum ranges over all triples $(U,\zeta,k)$ obtained from all $\xi \in \cal C_0$. Since $k_e \geq 1$ and $m_e(\zeta) \geq 2$ for all $e \in E(U)$, we find
\begin{equation*}
\sum_{e\in E(U)} k_e (m_e(\zeta) -2) \geq \sum_{  e\in E(U)}(m_e(\zeta)-2) = \abs{\zeta} - 2 \abs{E(U)}\,.
\end{equation*}
Since $q \geq 1$, we therefore get
\begin{equation*}
 \E  \Tr B^\ell B^{\ell *}
  \le n^2 
 \sum_{(U,\zeta,k)} n^{-g(U)}q^{2\abs{E(U)}-\abs{\zeta}}\kappa^{g(U)}\,.
\end{equation*}
Note that the summand does not depend on $k$. For fixed $(U,\zeta)$, we may therefore estimate the sum over $k$ by estimating from above the number of families $k = (k_e)_{e \in E(U)}$ such that $k_e \geq 1$ for all $e \in E(U)$ and $\sum_{e \in E(U)} k_e m_e(\zeta) = 2 \ell$. Since $m_e(\zeta) \geq 2 \geq 1$, this is certainly bounded by the  number of families $k = (k_e)_{e \in E(U)}$ such that $k_e \geq 1$ for all $e \in E(U)$ and $\sum_{e \in E(U)} k_e = 2 \ell$, which is equal to
\begin{equation*}
\binom{2 \ell - 1}{\abs{E(U)} - 1} \leq \pbb{\frac{6 \ell}{\abs{E(U)}}}^{\abs{E(U)}}\,.
\end{equation*}
We conclude that
\begin{equation*} \E  \Tr B^\ell B^{\ell *}
  \le  n^2 
 \sum_{(U,\zeta)} \pbb{\frac{6 \ell}{\abs{E(U)}}}^{\abs{E(U)}} n^{-g(U)}q^{2\abs{E(U)}-\abs{\zeta}}\kappa^{g(U)}\,,
\end{equation*}
where the sum ranges over pairs $(U,\zeta)$ obtained from all $\xi \in \cal C_0$. We estimate this sum using the following bounds on $\abs{E(U)}$.

\begin{lemma} \label{lem:EU_bounds}
For $U$ as above we have \begin{equation*}\label{estimate1g}
 \abs{E(U)}   \le 3g(U)+1 \,,
\end{equation*} 
and \begin{equation*}
 \abs{V(U)}   \le 2g(U)+2\,.
\end{equation*}

\end{lemma}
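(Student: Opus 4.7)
The proof will rely on the handshake lemma together with the structural degree properties of $U$ recorded in the list on page \pageref{U_prop} just before the statement, namely that every vertex in $V(U)\setminus\{1,\gamma\}$ has degree at least three while the two distinguished vertices $1$ and $\gamma$ have degree at least one. Combined with the definition $g(U)=\abs{E(U)}-\abs{V(U)}+1$, these will give both bounds by elementary arithmetic.

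\emph{Lower bound on $\abs{E(U)}$.} Since $\abs{V(U)}\geq 1$, the identity $g(U)=\abs{E(U)}-\abs{V(U)}+1$ yields $g(U)\leq\abs{E(U)}$. For the bound $\abs{E(U)}\geq 1$, I would argue that a path $\zeta$ with $\abs{\zeta}=\sum_{e\in E(U)}m_e(\zeta)k_e=2\ell\geq 2$ cannot live in a multigraph with no edges; alternatively, since $\xi\in\cal C$ has length $2\ell\geq 2$, the graph $G_\xi$ has at least one edge and the collapsing procedure does not remove edges.

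\emph{Upper bound on $\abs{E(U)}$.} Applying the handshake lemma,
\begin{equation*}
2\abs{E(U)} \;=\; \sum_{v\in V(U)}\deg_U(v) \;\geq\; 3\bigl(\abs{V(U)}-\abs{\{1,\gamma\}}\bigr)+\abs{\{1,\gamma\}} \;\geq\; 3\abs{V(U)}-4,
\end{equation*}
where the last inequality holds uniformly in whether $\gamma=1$ or $\gamma\neq 1$ (in the case $\gamma=1$ the bound is even sharper, $3\abs{V(U)}-2$). Rearranging gives $\abs{V(U)}\leq (2\abs{E(U)}+4)/3$, and substituting into $g(U)=\abs{E(U)}-\abs{V(U)}+1$ yields
\begin{equation*}
g(U) \;\geq\; \abs{E(U)}-\frac{2\abs{E(U)}+4}{3}+1 \;=\;\frac{\abs{E(U)}-1}{3},
\end{equation*}
i.e.\ $\abs{E(U)}\leq 3g(U)+1$.

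\emph{Bound on $\abs{V(U)}$.} Using once more $\abs{V(U)}=\abs{E(U)}-g(U)+1$ and the just-proved bound $\abs{E(U)}\leq 3g(U)+1$,
\begin{equation*}
\abs{V(U)} \;\leq\; \bigl(3g(U)+1\bigr)-g(U)+1 \;=\; 2g(U)+2.
\end{equation*}
There is no real obstacle here; the only mild subtlety is making sure the case $\gamma=1$ (so that $U$ has only one distinguished low-degree vertex) is covered, but the handshake computation gives a strictly stronger inequality in that case so the claimed bounds still hold.
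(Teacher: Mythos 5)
Your proposal is correct and takes essentially the same route as the paper: the handshake lemma combined with the degree bounds (degree $\geq 3$ off $\{1,\gamma\}$, degree $\geq 1$ at $1$ and $\gamma$) gives $2\abs{E(U)} \geq 3\abs{V(U)} - 4$, from which both upper bounds follow by algebra with $g(U) = \abs{E(U)} - \abs{V(U)} + 1$. Your extra justification that $\abs{E(U)} \geq 1$ (which the paper waves away as ``immediate'') and your explicit note that $\gamma = 1$ only strengthens the handshake inequality are both reasonable small additions.
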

\begin{proof}
 In order to prove the upper bound on $\abs{E(U)}$, we write
$$2\abs{E(U)}=\sum_{v\in V(U)}\op{deg}(v)\ge 2+3(|V(U)|-2)\,,$$ where we used the fact that each vertex in $V(U) \setminus \{1, \gamma\}$ has degree at least $3$, and $\{1,\gamma\}$ have degree at least one. This gives \begin{equation*}\label{estimate1s}
|V(U)|\le \f{2}{3}\abs{E(U)}+\frac{4}{3}\,,\end{equation*} which implies that $g(U) \ge \f{\abs{E(U)} - 1}{3}$ and $g(U) \ge \f{\abs{V(U)} }{2} -1$.
\end{proof}

Using Lemma \ref{lem:EU_bounds} we conclude
\begin{equation*}
\E  \Tr B^\ell B^{\ell *}
  \le n^2 \ell  q^2
 \sum_{(U,\zeta)}\pbb{\f{12 \ell}{g(U)+1}}^{3g(U)} n^{-g(U)}q^{6g(U)-\abs{\zeta}} \kappa^{g(U)} \,,
\end{equation*}
where we used that $g(U) \leq \ell$.

Next, we claim that the number of pairs $(U,\zeta)$ such that $|E(U)| \leq  e$,  $|V(U)| \leq v$ and $\zeta$ has lenght $m$ is at most
$$
e^m v^e.
$$
Indeed, we may build the path $\zeta$ directly as follows: at each of the $m$ steps of the path $\zeta$, we choose an edge numbered from one to $e$ and, for each newly visited edge, we attribute an end vertex among $v$ possible choices. Since $(U,\zeta)$ is normal and $\zeta$ visits all edges of $U$, this characterizes uniquely the pair $(U,\zeta)$. Using Lemma \ref{lem:EU_bounds}, we therefore find that the number of pairs $(U,\zeta)$ such that $U$ has genus $g$ and $\zeta$ has length $m$ is bounded by
\begin{equation*}
(3g + 1)^m (2 g + 2)^{3g+1}.
\end{equation*}
Putting everything together, we find 
\begin{align} \E  \Tr B^\ell B^{\ell *}
&\le n^2 \ell q^2
 \sum_{g=0}^\ell\sum_{m=1}^{2\ell}(3g + 1)^m (2 g + 2)^{3g+1}\pbb{\f{24 \ell}{2g+2}}^{3g} n^{-g}q^{6g-m}\kappa^g
 \notag
 \\ \label{BB_est_sum}
 &\le C n^2 \ell q^2 \sum_{m=1}^{2\ell} \pbb{\f{1}{q}}^m+ C n^2 \ell^2 q^2
 \sum_{g=1}^\ell \pbb{\frac{C \kappa \ell^3 q^6}{n}}^g \sum_{m=1}^{2\ell} \pbb{\frac{4 g}{q}}^m\,,
\end{align}
where we used that $g \leq \ell$. Since $q \geq 1$, the first term of \eqref{BB_est_sum} is bounded by $C n^2 \ell^2 q^2$.
For any $x > 0$ we have $\sum_{m = 1}^{2\ell} x^m \leq 2 \ell (1 + x^\ell)$. Thus, the second term of \eqref{BB_est_sum} is estimated by
\begin{equation} \label{BB_est_sum_1}
C n^2 \ell^{3} q^2 \sum_{g = 1}^\ell \pbb{\frac{C \kappa \ell^3 q^6}{n}}^g
+ C n^2 \ell^{3} q^2 \sum_{g = 1}^\ell \pbb{\frac{C \kappa \ell^3 q^6}{n}}^g \pbb{\frac{4 g}{q}}^{2 \ell}\,.
\end{equation}
Now, assume that \begin{equation} \label{ell_assumptions_proof}
n \geq 2 C \kappa \ell^3 q^6 \,, \qquad \ell \leq \frac{q}{8} \log \frac{n}{C \kappa \ell^3 q^6}
\end{equation}
(we will then check that  \eqref{ell_assumptions} implies \eqref{ell_assumptions_proof}).  By the assumption \eqref{ell_assumptions_proof}, the first term of \eqref{BB_est_sum_1} is estimated by $C n^2 \ell^{3} q^2$. The second term of \eqref{BB_est_sum_1} may be written as
\begin{equation*}
C n^2 \ell^{3} q^2 \sum_{g = 1}^\ell \exp \qbb{- g \log \frac{n}{C \kappa \ell^3 q^6} + 2 \ell \log \frac{4 g}{q}}\,.
\end{equation*}
The argument of the exponential is maximized for
\begin{equation*}
g = 2 \ell \bigg/ \log \frac{n}{C \kappa \ell^3 q^6}\,.
\end{equation*}
Plugging this back in, we find that if
\eqref{ell_assumptions_proof}
 holds then this maximum is reached for $g \leq 4q$ and we deduce that $\E \tr B^\ell B^{*\ell} \leq C n^2 \ell^{4} q^2$.

What remains, therefore, is to show that \eqref{ell_assumptions} with large enough $C_0$ and small enough $c_0$ implies \eqref{ell_assumptions_proof}. From \eqref{ell_assumptions} we find
\begin{equation} \label{n_geq_est}
n \geq \pbb{\frac{\kappa \ell^3 q^6}{c_0^3}}^{\frac{1}{1 - 3 \delta}}\,,
\end{equation}
which implies the first estimate of \eqref{ell_assumptions_proof} for small enough $c_0$. Moreover, \eqref{n_geq_est} yields
\begin{equation*}
n^{3 \delta} \leq  \frac{c_0^3  n}{\kappa \ell^3 q^6}\,.
\end{equation*}
We conclude from \eqref{ell_assumptions} that
\begin{equation*}
\ell \leq c_0 \delta q \log n = \frac{c_0 q}{3} \log n^{3 \delta} \leq \frac{c_0 q}{3} \log \pbb{\frac{c_0^3 n}{\kappa \ell^3 q^6}}\,,
\end{equation*}
which implies the second estimate of \eqref{ell_assumptions_proof} for small enough $c_0$. This concludes the proof of Proposition \ref{prop:BB_bound}.

\subsection{Proof of Theorem \ref{thm:rhoB}} \label{sec:pfTh14}
This is a simple application of Proposition \ref{prop:BB_bound} and Markov's inequality. We have
\begin{equation}\label{eq:fromTrace_to_SpecRadius}
\rho(B)=\rho(B^{\ell})^{1/\ell}\le \|B^{\ell}\|^{1/\ell}=\|B^{\ell}B^{*\ell}\|^{1/(2\ell)}\le( \Tr (B^{\ell}{B^*}^{\ell}))^{1/(2\ell)}\,.
\end{equation}
Thus we get
\begin{align*}
\bb P(\rho(B) > 1 + \epsilon) \leq \bb P \pb{\tr B^\ell B^{* \ell} > (1+ \epsilon)^{2 \ell}} \leq \frac{\E \tr B^\ell B^{* \ell}}{(1 + \epsilon)^{2 \ell}}\,.
\end{align*}
Choosing $\delta \deq \frac{1}{30}$ in Proposition \ref{prop:BB_bound}, we find by assumption of Theorem \ref{thm:rhoB} that \eqref{ell_assumptions} holds with $\ell \deq  \lceil \frac c 2 q \log n \rceil$, with some universal constant $c > 0$. We therefore find from Proposition \ref{prop:BB_bound}
\begin{equation*}
\bb P(\rho(B) > 1 + \epsilon) \leq C n^2 q^{6}(\log n)^4 ( 1+ \eps)^{- c  q \log n} \leq C n^{3 - c  q \log ( 1+ \eps) }\,,
\end{equation*}
as claimed. This concludes the proof of Theorem \ref{thm:rhoB}.

\subsection{Proof of Theorem \ref{thm:norm_H}}

We start by proving the following intermediate result.
\begin{proposition} \label{prop:norm_H0}
There are universal constants $C, c > 0$ such that the following holds. Suppose that $H$ satisfies Assumption \ref{ass:H}. Then for $1 \leq q \leq n^{1/{10}} \kappa^{-1/{9}}$ and $\delta \geq 0$, we have
\begin{equation}\label{eq:propo4.9}
\bb P \pbb{\norm{H} \ge 2 +  C \delta + \frac{C}{q}} \leq C n^{3 - c q \log ( 1 + \sqrt{\delta})} + n\mathrm e^{- q^2h(\del \vee \delta^2)}\,,
\end{equation} where $h(\del) \deq (1+\del)\log(1+\del)-\del$.
\end{proposition}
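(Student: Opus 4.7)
The plan is to combine the deterministic bound of Theorem \ref{thm:From_NBmatrix_to_matrix} with two concentration events: one controlling $\rho(B)$ via Theorem \ref{thm:rhoB}, and one controlling $\|H\|_{2\to\infty}$ via Bennett's inequality applied to the row sums $\sum_j |H_{ij}|^2$. Since Assumption \ref{ass:H} gives $\|H\|_{1\to\infty} \leq 1/q$ almost surely, Theorem \ref{thm:From_NBmatrix_to_matrix} will reduce to
$$
\|H\| \;\leq\; \|H\|_{2\to\infty}\, f\!\PAR{\rho(B) / \|H\|_{2\to\infty}} + 7/q.
$$

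The pivotal observation I will use is that the map $a \mapsto a\, f(b/a)$ is nondecreasing on $(0,\infty)$ for each fixed $b > 0$: for $a \geq b$ it equals $2a$, for $a \leq b$ it equals $b + a^2/b$, and both pieces are nondecreasing with matching value $2b$ at $a = b$. Hence on the event $A \deq \{\|H\|_{2\to\infty} \leq 1 + \delta\}$ I may substitute $1+\delta$ in the argument above, so on $A \cap B$ with $B \deq \{\rho(B) \leq 1 + \sqrt{\delta}\}$ the deterministic bound becomes $\|H\| \leq (1+\delta)\, f\!\PAR{(1+\sqrt\delta)/(1+\delta)} + 7/q$. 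A short direct computation will then show the first term is at most $2 + C\delta$ for a universal $C$: when $\delta \geq 1$ the argument of $f$ is at most $1$ so $f \equiv 2$, yielding $2+2\delta$; when $\delta \leq 1$ one expands $(1+\delta)\, f\!\PAR{(1+\sqrt\delta)/(1+\delta)} = (1+\sqrt\delta) + (1+\delta)^2/(1+\sqrt\delta) = 2 + (3\delta+\delta^2)/(1+\sqrt\delta) \leq 2 + 4\delta$. So $A \cap B$ will imply $\|H\| \leq 2 + C\delta + C/q$.

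The tail bound on $\bb P(B^c)$ is immediate from Theorem \ref{thm:rhoB} applied with $\eps = \sqrt\delta$. For $\bb P(A^c)$, I will rewrite $\{\|H\|_{2\to\infty} > 1+\delta\} = \bigcup_i \{\sum_j |H_{ij}|^2 > 1 + 2\delta + \delta^2\}$, and since $\sum_j \E|H_{ij}|^2 \leq 1$ this event is contained in $\bigcup_i \{\sum_j (|H_{ij}|^2 - \E|H_{ij}|^2) > 2\delta+\delta^2\}$. The centred summands are independent, bounded by $1/q^2$ in absolute value, and have total variance at most $(1/q^2)\sum_j \E|H_{ij}|^2 \leq 1/q^2$. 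Bennett's inequality therefore yields a tail of $\exp(-q^2 h(t))$ at deviation $t$. Taking $t = 2\delta + \delta^2$, using $t \geq \delta \vee \delta^2$ together with monotonicity of $h$, and unioning over $i$, I would obtain $\bb P(A^c) \leq n\exp(-q^2 h(\delta \vee \delta^2))$. A final union bound over $A^c \cup B^c$ will give the proposition.

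The main conceptual point is the monotonicity of $a \mapsto a f(b/a)$. Since Assumption \ref{ass:H} gives no lower bound on $\|H\|_{2\to\infty}$, one cannot directly use Corollary \ref{cor:From_NBmatrix_to_matrix}, whose bound diverges as $\|H\|_{2\to\infty} \to 0$; monotonicity lets us work with the upper bound alone. The remaining ingredients---the explicit evaluation of $(1+\delta)\, f\!\PAR{(1+\sqrt\delta)/(1+\delta)}$ and the Bennett tail---should be routine.
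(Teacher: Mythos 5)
Your proof is correct and follows the paper's high-level strategy: bound $\norm{H}_{1\to\infty}\leq 1/q$ deterministically, control $\norm{H}_{2\to\infty}$ via Bennett's inequality applied row by row (with a union bound over rows), control $\rho(B)$ via Theorem \ref{thm:rhoB}, and then feed the resulting two events into the deterministic spectral bound. Your Bennett computation ($X_j=\abs{H_{ij}}^2$, $X_j\leq 1/q^2$, $\sum_j\E X_j^2\leq 1/q^2$, deviation $2\delta+\delta^2\geq\delta\vee\delta^2$) reproduces exactly the paper's estimate $n\me^{-q^2 h(\delta\vee\delta^2)}$, and your choice $\eps=\sqrt\delta$ in Theorem \ref{thm:rhoB} works just as well as the paper's $\eps=\sqrt{\delta/C}$ (the paper absorbs the difference into the universal constant $c$).

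Where you genuinely improve on the paper's presentation is in the deterministic step. The paper invokes Corollary \ref{cor:From_NBmatrix_to_matrix}, whose bound
$2\norm{H}_{2\to\infty}+\frac{(\rho(B)-\norm{H}_{2\to\infty})_+^2}{\norm{H}_{2\to\infty}}+7\norm{H}_{1\to\infty}$
diverges as $\norm{H}_{2\to\infty}\to 0$, and Assumption \ref{ass:H} gives only an upper bound on row $\ell^2$-norms, not a lower bound; so a literal application of the Corollary on the event $\{\norm{H}_{2\to\infty}<1+\delta\}$ leaves a genuine issue when $\norm{H}_{2\to\infty}$ is small. Your observation that $a\mapsto a\,f(b/a)$ is nondecreasing on $(0,\infty)$ (it equals $b+a^2/b$ for $a\leq b$ and $2a$ for $a\geq b$, matching continuously) is the clean way to handle this: it lets you substitute the upper bound $1+\delta$ for $\norm{H}_{2\to\infty}$ directly in the bound of Theorem \ref{thm:From_NBmatrix_to_matrix} without needing a lower bound, and your explicit evaluation $(1+\delta)f((1+\sqrt\delta)/(1+\delta))\leq 2+4\delta$ for $\delta\leq 1$ (and $=2+2\delta$ for $\delta\geq 1$) is correct. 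So the proposal is not merely correct; it is slightly more careful than the paper's two-line conclusion, and the monotonicity lemma is a nice reusable observation that makes the role of Theorem \ref{thm:From_NBmatrix_to_matrix} versus Corollary \ref{cor:From_NBmatrix_to_matrix} transparent.
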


\bpr
The proof is a combination of Corollary \ref{cor:From_NBmatrix_to_matrix} and Theorem \ref{thm:rhoB}. By assumption, we have $\norm{H}_{1\to\infty} \leq 1/q$. We estimate $\norm{H}_{2\to\infty}$ using Bennett's inequality \cite[Theorem 2.9]{BLM}. Fix $i \in [n]$ and define the independent random variables $X_1, \dots, X_n$, where $X_j \deq \abs{H_{ij}}^2$. By assumption on $H$ we have $X_j \leq 1/q^2$ and $\sum_j \E X_j^2 \leq 1/q^2$. Since $\sum_j \E X_j \leq 1$, we conclude from Bennett's  inequality  that
\begin{equation}\label{eq:hoeffding}
\bb P \pbb{\sum_j \abs{H_{ij}}^2 \ge 1 + t} \leq \mathrm e^{- q^2h(t) }\,.
\end{equation} 
By a union bound, we therefore deduce that $\bb P ( \norm{H}_{2\to\infty} \ge 1 + t ) \leq \bb P(\norm{H}_{2\to\infty} \ge \sqrt{1 + t \vee t^2}) \leq n \mathrm e^{- q^2h(t \vee t^2)}$. We apply this last statement to $ t = \delta$. The claim now follows from Corollary \ref{cor:From_NBmatrix_to_matrix}  and Theorem \ref{thm:rhoB} applied to $\eps^2 = \delta / C$. 
\epr

We are now ready to prove Theorem \ref{thm:norm_H}. We begin by noting that without loss of generality we can assume $q \geq 1$. Indeed, if Theorem \ref{thm:norm_H} is already established for $q \geq 1$ then in the case $q < 1$ we can apply it to $H' = H / ( 1/ q)$, with corresponding parameters $q' = 1$ and $\kappa' = \kappa q^2  \leq \kappa$, and the claim follows easily.

From now on we therefore assume that $q \geq 1$. Let us first prove \eqref{est_EH}. We claim that for large enough $K$ and any $n,q, \ka$ satisfying \begin{equation}\label{eq:179161}1 \leq q \leq n^{1/{10}} \kappa^{-1/{9}}\AND \ka\ge 1\,,\end{equation} 
we have
\begin{equation} \label{prob_claim}
\bb P \pb{ \absb{ \norm{H}  - \bb E \norm{H}  }  \le   \del}+ \bb P \pb{  \norm{H}   \le 2+ 2 C \del}\;>\; 1\,,
\end{equation}
where $C$ is the constant in Proposition \ref{prop:norm_H0} and
\begin{equation*}
\delta \deq  \frac{ K \eta}{ \sqrt{1 \vee  \log \eta}} \,, \qquad \eta \deq \frac{\sqrt{\log n}}{q}\,.
\end{equation*}

Supposing for now that \eqref{prob_claim} has been proved, we find that the intersection of both events from the left-hand side of \eqref{prob_claim} is nonempty, and hence $\bb E\norm{H}   \le 2+ (2 C + 1)\del$, which concludes the proof of \eqref{est_EH}.

What remains, therefore, is to prove \eqref{prob_claim}. We first remark that, with $c_0 = 2 /  \me$, 
\begin{equation}\label{eq:lbqd}
q \delta  =  \frac{ K \sqrt {\log n}}{ \sqrt {1 \vee \log \eta} }  =\frac{ K \eta q}{ \sqrt {1 \vee \log \eta} }   \geq c_0 K 
\end{equation}
 uniformly over all $n \geq 2$ and $q \geq 1$ (distinguish $\eta \leq \me$ and $\eta \geq \me$). We shall assume that $c_0 K \geq 1$.
By \eqref{eq:BLM} and \eqref{eq:propo4.9}, it suffices to prove that, by choosing $K$ large enough,  the three numbers 
\begin{equation}\label{eq:1791611} a_1 \deq q^2\del^2\,, \qquad a_2 \deq q\log(1+\sqrt{\del})\,, \qquad a_3 \deq q^2h(\del \vee \delta^2)-\log n\ee
can be made arbitrarily large, uniformly under the conditions \eqref{eq:179161}. Indeed, the term $C/q$ from the left-hand side of \eqref{eq:propo4.9} is bounded by $C \delta / ( c_0 K)  \leq C \delta$ by \eqref{eq:lbqd}.

First, from \eqref{eq:lbqd}, $a_1 \geq  ( c_0 K )^2$. Hence, $a_1$ can be chosen arbitrarily large if $K$ is large enough. Similarly, to prove that $a_2$ can be chosen arbitrarily large, we note that the function $f(x) = \log(1 + \sqrt x)/x$ is positive and decreasing on $(0,1)$. Hence, from \eqref{eq:lbqd}, we have
\begin{equation*}
a_2 \geq   (c_0 K ) f(c_0 K/q ) \geq ( c_0 K ) f( c_0 K  ) = \log(1 + \sqrt{c_0K})\,,
\end{equation*}
as desired.

To prove that $a_3$ can be chosen arbitrarily large, we consider the cases $\eta \leq \me$ and $\eta \geq \me$ separately. For $\eta \leq \me$, we use $h(x) \geq c(x^2\wedge x)$ for all $x \geq 0$ and for some universal constant $c>0$. Hence $h( \delta \vee \delta^2) \geq c \delta^2$ and
\begin{equation*}
a_3 \geq c q^2 \delta^2 - \log n = (c K^2 - 1) \log n\,,
\end{equation*}
as desired. For $\eta \geq \me$,  we use the refined bound $h(x) \geq c (x^2\wedge x ) ( 1 \vee \log   x ) $ for all $x \geq 0$. Hence $h( \delta \vee \delta^2) \geq c \delta^2  \log \delta^2$ and, since $\log \delta^2 \geq c' \log \eta$  for some constant $c'>0$,  
\begin{equation*}
a_3 \geq c c' q^2 \delta^2 \log \eta  - \log n = (c c' K^2 - 1) \log n\,,
\end{equation*}
as desired. This concludes the proof of \eqref{est_EH}.

Finally, we prove \eqref{eq:mainH}. First,  Jensen's inequality implies that $\bb E \norm{H}_{2\to\infty} \geq 1$. Note also the triangle inequality gives $| \norm{X}_{2\to\infty} - \norm{Y}_{2\to\infty} | \leq \norm{X - Y}_{2\to\infty} \leq \sqrt{ \sum_{ij} |X_{ij} - Y_{ij} |^2}$, and the function $X \mapsto \norm{X}_{2 \to \infty}$ is separately convex in the entries of $X$ (as a maximum \eqref{def_norms_H} of separately convex functions). Hence, we may apply Talagrand's concentration inequality in the form of \cite[Theorem 6.10]{BLM}. We find that there exists a universal constant $c > 0$ such that for any $t > 0$,  
\begin{equation}\label{eq:BLM2}
\bb P \PAR{ { \absb{ \norm{H}_{2\to\infty}  - \bb E \norm{H}_{2\to\infty}  }  \geq t  / q}} \leq 2 e^{-ct ^2}\,. 
\end{equation}
This last inequality and \eqref{eq:BLM} implies that if we prove that the event 
 \begin{equation}\label{eq:mainH0}
\hbb{ \norm{H}  \leq  \norm{H}_{2\to\infty} \PAR{ 2 +  \frac C q }}
 \end{equation} 
  has probability at least $1/2$, then 
  $$
  \bb E  \norm{H}  \leq \bb E \norm{H}_{2\to\infty} \PAR{ 2 +  \frac C q } + \frac{2t}{q} \leq \bb E \norm{H}_{2\to\infty} \PAR{ 2 +  \frac{ C + 2t} q }\,,
  $$
  where $t$ is such that $4 e^{-ct^2} < 1/2$, the constant $c$ is as in  \eqref{eq:BLM}--\eqref{eq:BLM2}, and we used that $\bb E \norm{H}_{2\to\infty} \geq 1$. Therefore, it suffice to prove that \eqref{eq:mainH0} holds with probability at least $1/2$. With probability at least $3/4$, for some universal constant $t>0$, we have 
  $$
    \rho(B) \leq 1 + \frac{t}{q}\,. 
  $$ 
  and, from \eqref{eq:BLM2} with probability at least $3/4$, 
  $$
  \norm{H}_{2\to\infty} \geq \bb E \norm{H}_{2\to\infty} - \frac{t}{q} \geq 1 - \frac{t}{q}\,.
  $$
  We deduce from Corollary \ref{cor:From_NBmatrix_to_matrix} that with probability at least $1/2$, if $q \geq 2 t$, 
  $$
\bb  E  \norm{H} \leq 2 \norm{H}_{2\to\infty} + C \PAR{ \frac{( 2t / q) ^2}{ 1 - t / q} + \frac{ 1}{q}} \leq 2 \norm{H}_{2\to\infty} + \frac{C'}{q} \leq 2 \norm{H}_{2\to\infty} + \frac{\norm{H}_{2\to\infty}}{1 - C / q} \frac{ C'}{q}\,.  
  $$
  Adjusting the constant $C$, we obtain \eqref{eq:mainH0}. This concludes the proof of \eqref{eq:mainH}, and hence also of Theorem \ref{thm:norm_H}.

\section{Non-Hermitian matrices: proof of Theorem \ref{thm:norm_hH}} \label{sec:6}

The main estimate of this section is the following result.

\begin{proposition} \label{prop:nH_moment_est}
Let $H \in M_n(\C)$ be a random matrix whose entries $(H_{ij})_{1 \leq i , j \leq n}$ are independent mean-zero random variables. Moreover, suppose that there exist $q \geq 3$ and $\kappa \geq 1$ such that \eqref{H_moment_ass} holds. There exist universal constants $c_0 , C_0 > 0$ such that the following holds.
If $\ell \geq 1$, $q \geq 1$, and $\delta \in (0,1/3)$ satisfy \eqref{ell_assumptions} then $\E \tr H^\ell H^{*\ell} \leq C_0 n \ell^{4} q^2$.
\end{proposition}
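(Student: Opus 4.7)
The plan is to mimic the proof of Proposition \ref{prop:BB_bound}, adjusting only for the fact that we work with $H$ directly rather than with the nonbacktracking matrix $B$. Expanding entrywise,
\[
\tr H^\ell H^{*\ell} \;=\; \sum_{\xi^1,\xi^2} H_{\xi^1_0\xi^1_1}\cdots H_{\xi^1_{\ell-1}\xi^1_\ell}\,\overline{H_{\xi^2_0\xi^2_1}}\cdots\overline{H_{\xi^2_{\ell-1}\xi^2_\ell}},
\]
where the sum runs over pairs $(\xi^1,\xi^2) \in [n]^{\ell+1} \times [n]^{\ell+1}$ with $\xi^1_0 = \xi^2_0$ and $\xi^1_\ell = \xi^2_\ell$. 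Concatenating $\xi^1$ with the reverse of $\xi^2$ produces a closed sequence of length $2\ell$ analogous to the one in \eqref{trBB_paths}, with one crucial difference: the leading factor is $n$ rather than $n^2$, since we do not need to absorb the auxiliary free index that arose from the edge-based structure of $B$.

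Because every entry of $H$ is independent with mean zero, the expectation of such a product vanishes unless each \emph{directed} edge $(a,b)$ that appears is used at least twice in total across the $H$- and $\bar H$-factors. This is the announced automatic nonbacktracking constraint: a local backtrack $a \to b \to a$ within one of the walks contributes the independent factors $H_{ab}H_{ba}$ with vanishing expectation unless the directed edges $(a,b)$ and $(b,a)$ recur elsewhere. In particular the underlying undirected edges satisfy the same $\ne 1$ multiplicity condition as the one defining $\mathcal C$ in Section \ref{sec:pfpropBB}.

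With this in hand I would rerun the combinatorial machinery of Section \ref{sec:pfpropBB} essentially verbatim: form the graph $G_\xi$ as in Definition \ref{def_Gxi}, reduce to normal-form representatives, invoke Lemma \ref{lem:sum_gamma_Gamma} (whose proof uses only the moment bounds \eqref{Hyp0MomentsH}--\eqref{HypMomentsH}, which are insensitive to Hermiticity), contract chains of degree-two vertices to obtain the multigraph $U = U(\xi)$, and estimate the number of paths $\zeta$ on $U$ via Lemma \ref{lem:deg_max}. Summing the resulting geometric series under the regime \eqref{ell_assumptions}, as in \eqref{BB_est_sum}--\eqref{BB_est_sum_1}, yields the bound $C_0 n \ell^8 q^2$, with $n$ in place of $n^2$.

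The step I expect to demand the most care is the passage to the reduced multigraph $U$, since the structural nonbacktracking condition $\xi_{i-1} \ne \xi_{i+1}$ that fell out of the definition of $B$ is no longer automatic on the walks themselves. One must instead argue that the directed-edge multiplicity condition forces every degree-two vertex of $G_\xi$ to be traversed in a pass-through fashion, up to trivial backtracks whose contribution can be absorbed without disturbing the counting. Once this reduction is in place, the bounds on $g(U)$ and $\abs{E(U)}$ supplied by Lemma \ref{lem:EU_bounds}, together with the degree bound of Lemma \ref{lem:deg_max} and the genus-stratified enumeration of multigraphs, all port over without change, giving the claimed estimate.
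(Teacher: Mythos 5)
Your setup (entrywise expansion, the directed-edge multiplicity constraint from independence and mean zero, the factor $n$ in place of $n^2$) matches the paper, but the plan to ``rerun the combinatorial machinery of Section \ref{sec:pfpropBB} essentially verbatim'' with the \emph{undirected} graph $G_\xi$ of Definition \ref{def_Gxi} has a genuine gap, and it is exactly at the step you flag and then wave away. The walks $\xi^1,\xi^2$ here carry no nonbacktracking constraint, and the directed-edge multiplicity condition does \emph{not} force degree-two vertices of the undirected $G_\xi$ to be pass-through ``up to trivial trivial backtracks''. Concretely, take $\xi^1=\xi^2 = 1,2,1,3,1,\dots,1,m,1$: every directed edge $(1,j)$ and $(j,1)$ is crossed exactly twice in total, so the expectation is a nonvanishing product of second moments and the term is present in the sum. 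In the undirected $G_\xi$ each vertex $j\geq 2$ has degree one, so it is neither contracted (only degree-two vertices are) nor of degree at least three; the reduced multigraph is a star with $m-1$ edges and genus $0$, so the bound $\abs{E(U)}\le 3g(U)+1$ of Lemma \ref{lem:EU_bounds} is false, and with it the genus-stratified enumeration of multigraphs, the degree bound of Lemma \ref{lem:deg_max}, and the convergence of the sums in \eqref{BB_est_sum}--\eqref{BB_est_sum_1} all collapse (there are infinitely many admissible $U$ at genus zero). Note also that Lemma \ref{lem:sum_gamma_Gamma} itself cannot be invoked as is: it is stated for undirected edges and uses $H_{ji}=\ol H_{ij}$, whereas here $H_{ij}$ and $H_{ji}$ are independent random variables, so the moment factorization must be done edge by \emph{directed} edge.

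The missing idea, which is how the paper proceeds, is to carry out the entire graphical analysis with \emph{directed} graphs and multigraphs: $V(G_\xi)$ together with the ordered pairs $(\xi^\nu_{i-1},\xi^\nu_i)$ as edges, a directed analogue of Lemma \ref{lem:sum_gamma_Gamma} (Lemma \ref{lem:sum_gamma_Gamma_nH}), and a directed contraction. In that formulation every vertex other than the common endpoints has indegree and outdegree at least one, so there are no leaves; a degree-two vertex automatically has indegree one and outdegree one and is therefore traversed in pass-through fashion, so contraction is well defined and yields a directed multigraph $U$ whose non-endpoint vertices have degree at least three, restoring Lemma \ref{lem:EU_bounds}. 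Moreover the genus is now computed with the directed edge count, so out-and-back excursions like the example above acquire large genus and are correctly suppressed by the factor $n^{-g}\kappa^g$ rather than being miscounted at genus zero. With that replacement the rest of your outline (degree bound, enumeration, summation under \eqref{ell_assumptions}) does go through as in Section \ref{sec:pfpropBB}, yielding $C_0\, n\, \ell^{8} q^2$.
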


Once Proposition \ref{prop:nH_moment_est} is proved, Theorem \ref{thm:norm_hH} follows by the argument of Section \ref{sec:pfTh14}.

The rest of this section is devoted to the proof of Proposition \ref{prop:nH_moment_est}. The proof of Proposition \ref{prop:nH_moment_est} is similar to that of Proposition \ref{prop:BB_bound}. Essentially, the nonbacktracking condition in the definition of $B$ is replaced by the independence of the entries of $H$. We compute
\begin{equation*}
\Tr H^\ell H^{* \ell} = \sum_{\xi} H_{\xi^1_0 \xi^1_1 } H_{\xi^1_1 \xi^1_2} \cdots H_{\xi^1_{\ell - 1} \xi^1_\ell} \ol H_{\xi^2_0 \xi^2_1 } \ol H_{\xi^2_1 \xi^2_2} \cdots \ol H_{\xi^2_{\ell - 1} \xi^2_\ell}\,,
\end{equation*}
where the sum ranges over all $\xi = (\xi^\nu_i \col i \in \{0,1, \dots, \ell\}, \nu \in \{1,2\}) \in [n]^{2\ell + 2}$ such that $(\xi^1_0, \xi^1_\ell) = (\xi^2_0, \xi^2_\ell)$.

Because the entries of $H$ are independent and have mean zero, we find
\begin{equation} \label{est_nH1}
\E \Tr H^\ell H^{* \ell} = \E \sum_{\xi \in \cal C} H_{\xi^1_0 \xi^1_1 } H_{\xi^1_1 \xi^1_2} \cdots H_{\xi^1_{\ell - 1} \xi^1_\ell} \ol H_{\xi^2_0 \xi^2_1 } \ol H_{\xi^2_1 \xi^2_2} \cdots \ol H_{\xi^2_{\ell - 1} \xi^2_\ell}\,,
\end{equation}
where $\cal C$ is the set of pairs $\xi = (\xi^1, \xi^2) \in [n]^{2\ell + 2}$ satisfying $(\xi^1_0, \xi^1_\ell) = (\xi^2_0, \xi^2_\ell)$ and
\begin{equation*}
\sum_{\nu = 1}^2 \sum_{i = 1}^{\ell - 1} \ind{(\xi^\nu_{i - 1}, \xi^\nu_i) = (a,b)} \neq 1
\end{equation*}
for all $a,b \in [n]$.

As in Section \ref{sec:pfpropBB}, we estimate the right-hand side of \eqref{est_nH1} using graphs. In contrast to Section \ref{sec:pfpropBB}, in this section we always use directed graphs and multigraphs. The following definitions closely mirror those from Section \ref{sec:pfpropBB}. By definition, a (vertex-labelled) \emph{directed multigraph} $G = (V(G), E(G), \phi)$ consists of two finite sets, the set of \emph{vertices} $V(G)$ and the set of \emph{edges} $E(G)$, and a map $\phi = (\phi_+, \phi_-)$ from $E(G)$ to the ordered pairs of elements of $V(G)$. The edge $e \in E(G)$ is a \emph{loop} if $\phi_+(e) = \phi_-(e)$. We define the \emph{outdegree} $\deg_+(v) \deq \sum_{e \in E(G)} \ind{\phi_+(e) = v}$, the \emph{indegree} $\deg_-(v) \deq \sum_{e \in E(G)} \ind{\phi_-(e) = v}$, and the \emph{degree} $\deg(v) \deq \deg_+(v) + \deg_-(v)$ of a vertex $v \in V(G)$. As always, the \emph{genus} of $G$ is $g(G) \deq \abs{E(G)} - \abs{V(G)} + 1$. A \emph{path of length $l \geq 1$ in $G$} is a word $w = w_0 w_{01} w_1 w_{12} \dots w_{l-1 l} w_l$ such that $w_0, w_1, \dots, w_l \in V(G)$, $w_{01}, w_{12}, \dots, w_{l-1 l} \in E(G)$, and $\phi(w_{i - 1 i}) = (w_{i - 1}, w_i)$ for $i = 1, \dots, l$. We denote the length $l$ of $w$ by $\abs{w} \deq l$.
For $e \in E(G)$ we define the \emph{number of crossings of $e$ by $w$} to be $m_e(w) \deq \sum_{i = 1}^l \ind{w_{i-1 i} = e}$.

Moreover, the directed multigraph $G$ is called simply a \emph{directed graph} if $\phi$ is injective, i.e.\ there are no multiple edges. (Note that in our convention a directed graph may have loops.) For a directed graph $G$ we may and shall identify $E(G)$ with a set of ordered pairs of $V(G)$, simply identifying $e$ and $\phi(e)$. Similarly,  we identify a path $w$ with the reduced word $w_0 w_1 \dots w_l$ only containing the vertices, since we must have $w_{i - 1 i} = (w_{i - 1}, w_i)$ for $i = 1, \dots, l$.

The following definition is analogous to Definition \ref{def_Gxi}.
\begin{Def}
For $\xi \in \cal C$ we define the directed graph $G_\xi$ as
\begin{equation*}
V(G_\xi) \deq \hb{\xi^\nu_i \col i \in \{0, \dots, \ell\}, \nu \in \{1,2\}} \,, \qquad E(G_\xi) \deq \hb{(\xi^\nu_{i - 1}, \xi^\nu_i) \col i \in \{1, \dots, \ell\}, \nu \in \{1,2\}}\,.
\end{equation*}
Thus, $\xi^1 \equiv \xi^1_0 \xi^1_1 \cdots \xi^1_{\ell}$ and $\xi^2 \equiv \xi^2_0 \xi^2_1 \cdots \xi^2_{\ell}$ are paths in $G_\xi$.
\end{Def}

As in Section \ref{sec:pfpropBB}, we introduce an equivalence relation on $\cal C$ by saying that $\xi, \tilde \xi \in \cal C$ are equivalent if and only if there exists a permutation $\tau$ of $[n]$ such that $\tau(\xi_i^\nu) = \tilde \xi_i^\nu$ for all $i = 0, \dots, 2 \ell$ and $\nu = 1,2$.
The following result is analogous to Lemma \ref{lem:sum_gamma_Gamma}.

\beg{lem} \label{lem:sum_gamma_Gamma_nH}
For any $\bar \xi \in \cal C$ we have
$$\E\sum_{\xi \in [\bar \xi]}  H_{\xi^1_0 \xi^1_1 } H_{\xi^1_1 \xi^1_2} \cdots H_{\xi^1_{\ell - 1} \xi^1_\ell} \ol H_{\xi^2_0 \xi^2_1 } \ol H_{\xi^2_1 \xi^2_2} \cdots \ol H_{\xi^2_{\ell - 1} \xi^2_\ell} \le n^{1-g(G_{\bar \xi})} \kappa^{g(G_{\bar \xi})} q^{2\abs{E(G_{\bar \xi})} - 2\ell}\,.$$
\en{lem} 

\begin{proof}
The proof is almost identical to that of Lemma \ref{lem:sum_gamma_Gamma}. Using its notations, we suppose without loss of generality that $V(G_{\bar \xi}) = [s]$ and pick a (directed) spanning tree $T$ of $G_\xi$ such that, for all $t = 2, \dots, s$ the subgraph $T\vert_{[t]}$ is a spanning tree of $[t]$, and enumerate the edges $e_1, \dots, e_{s - 1}$ of $T$ so that for all $t = 1, \dots, s-2$, the edge $e_t$ is an edge of $T\vert_{[t+1]}$ that is not an edge of $T \vert_{[s]}$ and $\phi_-(e_t) = t$. Then the argument from the proof of Lemma \ref{lem:sum_gamma_Gamma} carries over verbatim, using the estimates \eqref{Hyp0MomentsH} and \eqref{HypMomentsH}, which also hold under the assumptions of Proposition \ref{prop:nH_moment_est}.
\end{proof}

Next, we define normal paths in analogy to Definition \ref{def:normal}.

\begin{Def} \label{def:normal_nh}
For $\nu = 1,2$, let $w^\nu = w^\nu_0 w^\nu_{01} w^\nu_1 w^\nu_{12} \dots w^\nu_{l^\nu-1 l^\nu} w^\nu_{l^\nu}$ be a path in a directed multigraph $G$. We say that $(w^1, w^2)$ is \emph{normal in $G$} if
\begin{enumerate}
\item
$V(G) = \{w^1_0, \dots, w^1_{l^1}, w^2_0, \dots, w^2_{l^2}\} = [s]$ where $s \deq \abs{V(G)}$;
\item
the vertices of $V(G)$ are visited in increasing order first by $w^1$ and then by $w^2$, i.e.\ if $w^1_i \notin \{w^1_0, \dots, w^1_{i - 1}\}$ then $w^1_i > w^1_1, \dots, w^1_{i - 1}$, and if $w^2_i \notin \{w^1_0, \dots, w^1_{l^1}, w^2_0, \dots, w^2_{i - 1}\}$ then $w^1_i > w^1_0, \dots, w^1_{l^1}, w^2_0, \dots, w^2_{i - 1}$.
\end{enumerate}
\end{Def}
Each equivalence class of $\sim$ in $\cal C$ has a unique representative $\xi = (\xi^1,\xi^2)$ that is normal in $G_\xi$. We denote $\cal C_0 \deq \h{\xi \in \cal C \col \xi \text{ normal in } G_\xi}$. Thus, from \eqref{est_nH1} and Lemma \ref{lem:sum_gamma_Gamma_nH} we deduce that
\begin{equation} \label{BB_est1_nH}
\E\Tr H^\ell{H^{\ell *}} \le n \sum_{\xi \in \cal C_0} n^{-g(G_\xi)}\kappa^{g(G_\xi)} q^{2\abs{E(G_\xi)} - 2\ell}\,.
\end{equation}

\begin{figure}[!ht]
\begin{center}
{\scriptsize 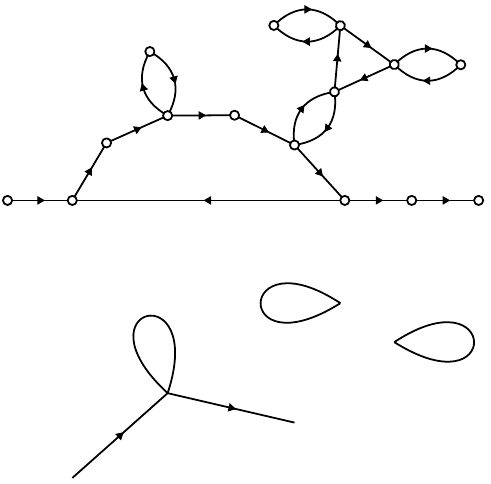}
\end{center}
\caption{In the top diagram we draw the graph $G_\xi$ associated with the pair $\xi = (\xi^1, \xi^2)$, where $\xi^1 =$ 1,2,3,4,5,6,7,8,9,8,9,8,10,11,10,7,8,10,11,10,7,6,7,6,12,13,14 and $\xi^2 = $ 1,2,3,4,15,4,5,6,12,2,3,4,15,4,15,4,5,6,12,2,3,4,5,6,12,13,14. Here $\ell = 26$. The total number of crossings by $\xi^1$ and $\xi^2$ of the edges $(2,3)$, $(3,4)$, $(4,5)$, $(5,6)$, $(6,12)$, $(12,2)$, $(4,15)$, $(15,4)$ is three, and of all other edges two. Note that $\xi$ is normal in $G_\xi$.
In the bottom diagram we draw the directed multigraph $U$ associated with $\xi$, which in this example is just a directed graph (i.e.\ it has no multiple edges). The paths $\zeta^1, \zeta^2$ in $U$ associated with $\xi$ are $\zeta^1 = $ 12345666775677545489 and $\zeta^2 = $ 12334823334823489. The pair $(\zeta^1, \zeta^2)$ is also normal in $U$. Here $\gamma = 9$.
\label{fig:U_directed}}
\end{figure}

We now introduce a parametrization $(U, \zeta, k)$ of $\cal C_0$ obtained by deleting vertices of $G_\xi$ that have degree two, except the vertices $1 = \xi_0^1 = \xi_0^2$ and $\xi_\ell^1 = \xi_\ell^2$. The construction follows verbatim that of Section \ref{sec:pfpropBB}, whereby all graphs and multigraphs are directed. Note that every vertex of the directed graph $G_\xi$ that is not $1$ or $\xi^1_\ell = \xi^2_\ell$ and has degree two has indegree one and outdegree one. More formally, we define $\cal I(\xi) \deq \{v \in V(G_\xi) \setminus \{1, \xi^1_\ell = \xi^2_\ell\}\}$ as well as $\Sigma(\xi)$ to be the set of directed paths $w = w_0 \dots w_l$ in $G_\xi$ such that $w_1, \dots, w_{l - 1} \in \cal I(\xi)$ and $w_0, w_l \in \cal I(\xi)$. Then we define $\hat G_\xi$ to be the directed multigraph obtained from $G_\xi$ by replacing each directed path $w \in \Sigma(\xi)$ by a directed edge from $w_0$ to $w_l$. We denote the resulting paths in $\hat G_\xi$ associated with $\xi^1, \xi^2$ by $\hat \xi^1, \hat \xi^2$, and the length of the path $w$ associated with $e \in E(\hat G_\xi)$ by $\hat k_e$. Applying a suitable bijection $\tau \col V(\hat G_\xi) \to [\abs{V(\hat G_\xi)}]$, we obtain the triple $(U,\zeta,k)$, where $U$ is a directed multigraph, $\zeta = (\zeta^1, \zeta^2)$ is a pair of paths normal in $U$ of lengths $r^1,r^2$ satisfying $1 = \zeta^1_0 = \zeta^2_0$ and $\gamma \deq \zeta^1_{r^1} = \zeta^2_{r^2}$, and $k = (k_e)_{e \in E(U)}$ is the family of weights of the edges of $U$. See Figure \ref{fig:U_directed} for an illustration of the construction of $(U,\zeta,k)$. As in Section \ref{sec:pfpropBB} every vertex in $V(U) \setminus \{1, \gamma\}$ has degree at least three. The remainder of the proof now follows to the letter the argument from Section \ref{sec:pfpropBB} starting on page \pageref{U_prop}. This concludes the proof of Proposition \ref{prop:nH_moment_est}.

\bibliographystyle{abbrv}
\bibliography{bib}

\bigskip

\noindent
Florent Benaych-Georges, 
MAP 5 (CNRS UMR 8145) - Universit\'e Paris Descartes, 45 rue des Saints-P\`eres 75270 Paris cedex~6,  France. Email:
\href{mailto:florent.benaych-georges@parisdescartes.fr}{florent.benaych-georges@parisdescartes.fr}.
\\[1em]
Charles Bordenave, 
Institut de Math\'ematiques de Marseille (CNRS UMR 7373) - Aix-Marseille Universit\'e, 163 Avenue de Luminy  13009 Marseille. France. Email:
\href{mailto:charles.bordenave@univ-amu.fr}{charles.bordenave@univ-amu.fr}.
\\[1em]
Antti Knowles, 
University of Geneva, Section of Mathematics, 2-4 Rue du Li\`evre, 1211 Gen\`eve 4, Switzerland. 
Email:
\href{mailto:antti.knowles@unige.ch}{antti.knowles@unige.ch}.

\en{document}